\DeclareMathOperator{\pr}{pr}  
\newcommand{\abar}{{\ensuremath{\bar{a}}}}
\newcommand{\bbar}{{\ensuremath{\bar{b}}}}
\newcommand{\cbar}{{\ensuremath{\bar{c}}}}
\newcommand{\dbar}{{\ensuremath{\bar{d}}}}
\newcommand{\pbar}{{\ensuremath{\bar{p}}}}
\newcommand{\kbar}{{\ensuremath{\bar{k}}}}
\newcommand{\xbar}{{\ensuremath{\bar{x}}}}
\newcommand{\ybar}{{\ensuremath{\bar{y}}}}
\newcommand{\zbar}{{\ensuremath{\bar{z}}}}
\newcommand{\mbar}{{\ensuremath{\bar{m}}}}
\newcommand{\Xbar}{{\ensuremath{\bar{X}}}}
\newcommand{\fbar}{{\ensuremath{\bar{f}}}}
\DeclareMathOperator{\Th}{Th}  
\DeclareMathOperator{\tp}{tp}  
\DeclareMathOperator{\rk}{rk}
\DeclareMathOperator{\td}{td}  
\DeclareMathOperator{\loc}{Loc}   
\newcommand{\alg}{\ensuremath{\mathrm{alg}}} 
\DeclareMathOperator{\ldim}{ldim}  
\DeclareMathOperator{\mrk}{mrk}  
\DeclareMathOperator{\Mat}{Mat}  
\DeclareMathOperator{\etd}{etd}  
\DeclareMathOperator{\ecl}{ecl} 
\newcommand{\restrict}[1]{\ensuremath{\!\!\upharpoonright_{#1}}}
\newcommand{\N}{\ensuremath{\mathbb{N}}}
\newcommand{\Z}{\ensuremath{\mathbb{Z}}}
\newcommand{\Q}{\ensuremath{\mathbb{Q}}}
\newcommand{\Cexp}{\ensuremath{\mathbb{C}_{\mathrm{exp}}}}
\newcommand{\Loo}{\ensuremath{L_{\omega_1,\omega}}}
\newcommand{\ga}{\ensuremath{\mathbb{G}_\mathrm{a}}}   
\newcommand{\gm}{\ensuremath{\mathbb{G}_\mathrm{m}}}  
\renewcommand{\phi}{\varphi}
\renewcommand{\le}{\ensuremath{\leqslant}}
\renewcommand{\ge}{\ensuremath{\geqslant}}
\newcommand{\tuple}[1]{\ensuremath{\langle #1 \rangle}}
\newcommand{\class}[2]{\ensuremath{\left\{ #1 \,\left|\, #2 \right.\right\}}}
\newcommand{\iso}{\cong}
\newcommand{\into}{\hookrightarrow}
\newcommand{\subs}{\subseteq} 
\newcommand{\elsubs}{\preccurlyeq} 
\newcommand{\minus}{\ensuremath{\smallsetminus}}
\newcommand{\strong}{\ensuremath{\lhd}} 
\newcommand{\nstrong}{\ensuremath{\not\kern-4pt\lhd\;}} 
\newcommand{\gen}[1]{\ensuremath{\left\langle #1 \right\rangle}} 
\newcommand{\cross}{\ensuremath{\times}}
\newbox\noforkbox \newdimen\forklinewidth
\noforkbox\hbox{\lower 2pt\box1\lower
2pt\box0\relax}
\def\unionstick{\mathop{\copy\noforkbox}\limits}
\def\nonfork_#1{\unionstick_{\textstyle #1}}
\newbox\doesforkbox
\doesforkbox\hbox{\lower 2pt\box1 \lower
2pt\box2\lower2pt\box0\relax}
\def\nunionstick{\mathop{\copy\doesforkbox}\limits}
\def\fork_#1{\nunionstick_{\textstyle #1}}
\newcommand{\findep}[4]{\ensuremath{#1 \nonfork_{#3}^{#4} #2}}
\newcommand{\algindep}[3]{\findep{#1}{#2}{#3}{\mathrm{ACF}}}
\newcommand{\ra}[3]{\ensuremath{#1 \stackrel{#2}{\longrightarrow} #3}}
\newcommand{\map}[5]{
\begin{eqnarray*}
#1 & \stackrel{#2}{\longrightarrow} & #3\\ #4 & \longmapsto & #5
\end{eqnarray*}}
\newcommand{\leteq}{\mathrel{\mathop:}=}
\newcommand{\seacness}{strong exponential-algebraic closedness}
\newcommand{\vfk}{very full kernel}
\newcommand{\Qalg}{\ensuremath{\Q^{\mathrm{alg}}}}
\newcommand{\sstrong}{\ensuremath{\mathrel{\prec \kern -0.53ex \shortmid}}}
\newcommand{\B}{\ensuremath{\mathbb{B}}}  
\newcommand{\A}{\ensuremath{\mathbb{A}}}  
\newcommand{\wbar}{\bar{w}}
\newcommand{\ECF}{\ensuremath{\mathbf{ECF}}} 
\newcommand{\ECFSK}{\ensuremath{\mathbf{ECF_{SK}}}}  
\newcommand{\ECFSKCCP}{\ensuremath{\mathbf{ECF_{SK,CCP}}}}  
\newcommand{\SPOK}{Schanuel property over the kernel}
\newtheorem{prop}{Proposition}[section]
\newtheorem{cor}[prop]{Corollary}
\newtheorem{theorem}[prop]{Theorem}
\newtheorem{lemma}[prop]{Lemma}
\newtheorem{fact}[prop]{Fact}
\newtheorem{conj}[prop]{Conjecture}
\theoremstyle{definition}
\newtheorem{defn}[prop]{Definition}
\title[Exponentially closed fields]{Exponentially closed fields and the conjecture on intersections with tori}
\author{Jonathan Kirby}
\address{Jonathan Kirby (corresponding author)\\
School of Mathematics\\
University of East Anglia\\
Norwich Research Park\\
Norwich\\
NR4~7TJ}
\email{jonathan.kirby@uea.ac.uk}
\author{Boris Zilber}
\address{Boris Zilber\\
Mathematical Institute\\
Univesity of Oxford\\
24--29 St Giles'\\
Oxford\\
OX1~3LB}
\email{zilber@maths.ox.ac.uk}
\date{version 2.2, \today}
\subjclass[2000]{03C65, 11G35}
\keywords{Exponential fields, anomalous intersections, Schanuel's conjecture, predimension}
\begin{document}

\begin{abstract}
{We give an axiomatization of the class \ECF\ of exponentially closed fields, which includes the pseudo-exponential fields previously introduced by the second author, and show that it is superstable over its interpretation of arithmetic. Furthermore, 
\ECF\ is exactly the elementary class of the pseudo-exponential fields if and only if the diophantine conjecture CIT on atypical intersections of tori with subvarieties is true.}
\end{abstract}
\maketitle


\section{Introduction}

\subsection{Pseudo-exponential fields}

In \cite{Zilber05peACF0}, the second author introduced a class of exponential fields he called \emph{pseudo-exponential fields}, as the class of models $\tuple{F;+,\cdot,\exp}$ 
of the following five axioms, including the statement of the well-known Schanuel conjecture. 
\begin{description}
 \item[1. ELA-field] $F$ is an algebraically closed field of characteristic zero, and its exponential map $\exp$ is a homomorphism from its additive group to its multiplicative group, which is surjective.

 \item[2. Standard kernel] the kernel of the exponential map is an infinite cyclic group generated by a transcendental element $\tau$.

 \item[3. Schanuel Property] The \emph{predimension function} 
\[\delta(\xbar) \leteq \td(\xbar, \exp(\xbar))- \ldim_\Q(\xbar)\]
satisfies $\delta(\xbar) \ge 0$ for all tuples $\xbar$ from $F$. 
\item[4. Strong exponential-algebraic closedness] If $V$ is a rotund, additively and multiplicatively free subvariety of $\ga^n\cross \gm^n$ defined over $F$ and of dimension $n$, and $\abar$ is a finite tuple from $F$, then there is $\xbar$ in $F$ such that $(\xbar,e^\xbar) \in V$ and is generic in $V$ over $\abar$.

\item[5. Countable Closure Property] For each finite subset $X$ of $F$, the exponential algebraic closure $\ecl^F(X)$ of $X$ in $F$ is countable. 
\end{description}
We call any model of axiom 1 an \emph{ELA-field}: E for exponentiation, L for the surjectivity (every non-zero element has a logarithm) and A for algebraically closed. 
Precise definitions of the terms in axioms 4 and 5 are given later, in sections \ref{classification of strong extensions} and \ref{exp trans section} respectively. Intuitively, axiom 4 says that any system of equations which can have a solution in some suitable exponential extension field  does already have a solution in $F$. It is the analogue for exponential fields of the algebraic closedness axiom for fields, which says that any polynomial equation (or, equivalently, any system of polynomial equations) which can have a solution in some extension field already has a solution in $F$. Axiom 5 says that such a system should only have countably many solutions. 

We denote by \ECFSK\ the class of models of axioms 1---4, and call the models \emph{Exponentially-Closed Fields with Standard Kernel}. We also denote by \ECFSKCCP\ the class of models of axioms 1---5. (In \cite{Zilber05peACF0} the same classes were called $\mathcal{EC}^*_{st}$ and $\mathcal{EC}^*_{st,ccp}$.) 

The main theorem of \cite{Zilber05peACF0} was that \ECFSKCCP\  has exactly one model in each uncountable cardinality, up to isomorphism. This categoricity theorem was not proved entirely in one paper. The proof depends on the main result from \cite{Zilber06covers}, and corrections to the two papers appeared in \cite{BZ11} and \cite{BK13patch}. The proof also uses the model-theoretic technique of \emph{quasiminimal excellent classes} which were developed for this purpose in \cite{Zilber05qmec}, and further developed and simplified in \cite{OQMEC} and \cite{BHHKK}. 

We write \B\ for the model of $\ECFSK$ of cardinality $2^{\aleph_0}$. The complex exponential field $\Cexp$ is known to satisfy axioms 1 and 2 (trivially) and 5 (less trivially). Schanuel's conjecture is a fundamental conjecture of transcendental number theory. Since strong exponential-algebraic closedness  is very natural from the model-theoretic point of view, it makes sense to conjecture that it holds for $\Cexp$. Together, these  two conjectures are therefore equivalent to the assertion that $\Cexp$ is isomorphic to $\B$. 

The axioms for $\ECFSK$  are not all first-order expressible, but they can all be expressed in the logic $\Loo$, which allows countable conjunctions of formulas. In fact, \ECFSK\ can also be viewed as the class of models of a complete first-order theory which omit the type of a non-standard integer \cite{NAPE}. We denote the first-order theory by $T_\B$. (The countable closure property is not expressible in $\Loo$, but can be expressed using the quantifier $Q x$: \emph{there exist uncountably many $x$ such that \ldots}. However, we will make no use of that axiom for the rest of the paper.)

While \ECFSK\ is a well-behaved class of structures, it is not an elementary class. Understanding the first-order theory $T_\B$ should give much more information which we believe may be useful in understanding the analytic geometry of $\Cexp$, perhaps even without assuming that $\Cexp$ is isomorphic to \B. This paper seeks to give an understanding of $T_\B$. Using the categoricity theorem for \ECFSKCCP\ we see that $T_\B$ is the complete first-order theory of $\B$, hence our notation. However, the results of this paper do not depend on the categoricity theorem. 

An obvious obstacle to the goal of understanding $T_\B$ comes from the integers. For any exponential field $F$, we will write $\ker(F) = \class{x\in F}{\exp(x) = 1}$, the kernel of the exponential map. We also define 
\[Z(F) = \class{r \in F}{\forall x[\exp(x) = 1 \to \exp(rx)=1]},\]
the \emph{multiplicative stabilizer} of the kernel. In any exponential field $F$ with standard kernel, $Z(F)$ will actually be $\Z$, the standard integers, and thus the first-order theory $T_\B$ contains the theory of true arithmetic, and hence is undecidable and unstable. Nonetheless, we can still look for stable-like behaviour of the theory. One analogue is the theory ACVF of algebraically closed valued fields. There the value group is unstable, but the stable part of the theory can be understood separately from the value group in terms of stable domination. The situation here is more complicated in some ways, because the value group in ACVF is still fairly tame, being o-minimal, whereas arithmetic is not tame in any sense. However, arithmetic can be isolated from the rest of the theory in a fairly strong way if and only if the diophantine conjecture known as CIT (Conjecture on Intersections of Tori with subvarieties) is true. We next describe our setting of Exponentially Closed Fields, and then explain how CIT is involved.
 
\subsection{Exponentially Closed Fields}

In this paper we introduce and study the class of Exponentially-Closed Fields which we denote by \ECF, which is obtained by slightly weakening the axioms for \ECFSK. Axioms 1 and 4 are the same, but 2 and 3 are replaced by 2$'$ and 3$'$ below.
\begin{description}
 \item[2$'$a. Cyclic kernel] The kernel is a cyclic $Z(F)$-module.
 \item[2$'$b. Transcendental kernel] Every element of the kernel is transcendental over $Z(F)$.
 \item[2$'$c. Theory of true arithmetic] $Z(F)$ with the restrictions of $+$ and $\cdot$ is a model of the full first-order theory of $\tuple{\Z;+;\cdot}$.
 \item[3$'$. The Schanuel Property over the kernel (SPOK)] \ The predimension function 
 \[ \Delta_F(\xbar) \leteq \td(\xbar, \exp(\xbar) /  \ker(F)) - \ldim_\Q(\xbar /  \ker(F)) \]
 satisfies $\Delta_F(\xbar) \ge 0$ for all tuples $\xbar$ from $F$. 
 \end{description}
 By $\td(Y/X)$ we mean the transcendence degree of the field extension $\Q(XY)/\Q(X)$ and by $\ldim_\Q(Y/X)$ we mean the dimension of the $\Q$-vector space spanned by $X \cup Y$, quotiented by the subspace spanned by $X$.

The main unconditional result of this paper about \ECF\ is one of existence and uniqueness of saturated models in all cardinalities above the continuum. This cannot be literally true because of the presence of arithmetic in the kernel, but is true over the (sufficiently saturated) kernel.
\begin{theorem}\label{unique sat model theorem}
For each $\aleph_0$-saturated model $R$ of $\Th(\Z)$, and for each cardinal $\lambda > 2^{\aleph_0}$ with $\lambda \ge |R|$, there is exactly one model $M \in \ECF$ such that $Z(M) = R$ and such that $M$ is saturated over its kernel.
\end{theorem}
\emph{Saturation over the kernel} means saturated with respect to those extensions which are within the class \ECF\ and do not extend the kernel. The precise definition is~\ref{sat over kernel defn}. It follows fro m this theorem that the subclasses $\ECF_R$ of $\ECF$ where we fix the model of the integers to be $R$, are \emph{superstable homogeneous classes} in the sense of \cite{HySh:629}. One can therefore do a certain amount of stability theory in $\ECF$, provided one avoids extending the kernel.

\subsection{The Conjecture on Intersections with Tori}

CIT was first formulated by the second author in model-theoretic studies of exponentiation \cite{Zilber02esesc} and then independently rediscovered by Bombieri, Masser and Zannier \cite{BMZ07} and in a more general form by Pink \cite{Pink05}, \cite{Pink05b}. The conjecture generalises the well-known Mordell-Lang and Andr\'e-Oort conjectures. It is now an active field of research which applies methods of diophantine and algebraic geometry and the theory of Shimura varieties as well as model theory.

Let $W \subs \gm^n$ be an irreducible subvariety, defined over $\Qalg$. Let $H \subs \gm^n$ be an algebraic subgroup.

Let $X$ be a connected component of $W \cap H$. Then $X$ is said to be an \emph{atypical} component of the intersection if and only if
\[\dim X > \dim W + \dim H - n.\]
(Sometimes $X$ is called an \emph{anomalous} or \emph{unlikely} component of the intersection.)

Let $W^{atyp}$ be the union of all the atypical components of $W \cap H$ for all algebraic subgroups $H$ of $\gm^n$.

The Conjecture on Intersections of subvarieties with Tori states:
\begin{conj}
For any $n\in \N$, if $W \subs \gm^n$ is any irreducible subvariety then $W^{atyp}$ is a proper Zariski-closed subset of $W$.
\end{conj}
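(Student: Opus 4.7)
This statement is the Conjecture on Intersections with Tori of Zilber, essentially equivalent to the Bombieri--Masser--Zannier formulation and the multiplicative case of the Zilber--Pink conjecture. It is open, so any honest proof proposal here is really a sketch of the partial-results strategy rather than a new argument; the paper itself uses CIT hypothetically, not as a theorem of its own.

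The plan splits the assertion into two claims: that $W^{atyp}$ is Zariski-closed, and that it is a proper subset of $W$. For closedness, although the union over algebraic subgroups $H \subs \gm^n$ is a priori infinite, one parametrises such $H$ by sublattices of $\Z^n$ and observes that an atypical component of $W \cap H$ forces $H$ to be contained in some minimal torus sharing a positive-dimensional ``anomalous'' subvariety with $W$. The Bombieri--Masser--Zannier Structure Theorem then gives finiteness of the maximal anomalous subvarieties of $W$, so that $W^{atyp}$ collapses to a finite union of closed sets. Properness requires showing that the generic point of $W$ misses every atypical component; the natural tools are height theory on $\gm^n$ (Zhang's theorem on small points, the Subspace Theorem of Evertse--Schlickewei--Schmidt) and, in the Pila--Zannier strategy, Pila--Wilkie counting in o-minimal structures together with lower bounds on Galois orbits of points lying on proper subtori, fed into an induction on $\dim W$ that uses the same Structure Theorem to locate the ``bad'' locus.

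The main obstacle is precisely where current technology runs out: for $\dim W = 1$ the original Bombieri--Masser--Zannier argument closes using Dobrowolski-type lower bounds, but the Galois-orbit and small-points inputs needed for arbitrary $\dim W$ and $n$ are unavailable in the required generality. This is exactly why the paper legitimately takes CIT as a hypothesis for the conditional half of its main equivalence; no complete unconditional proof can realistically be written down here, and the remainder of the paper is organised so that its main tameness results do not depend on it.
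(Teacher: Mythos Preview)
Your assessment is correct: the statement is presented in the paper as a conjecture (CIT), not as a theorem, and the paper offers no proof of it whatsoever. It is used purely as a hypothesis under which the class \ECFStrK\ becomes first-order axiomatizable and its theory complete; the unconditional results of the paper are carefully isolated from it.

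Your sketch of the known partial approaches (the Bombieri--Masser--Zannier structure theorem for closedness, height and o-minimal methods toward properness, and the gap at the Galois-orbit lower bounds) is accurate as background, but none of it appears in the paper itself. The paper simply states the conjecture, notes its equivalence with the formulation in \cite{Zilber02esesc}, and moves on. So there is nothing to compare against: you have correctly identified that no proof is given, and your commentary on why the conjecture remains open is reasonable supplementary material rather than a divergence from any argument in the text.
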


This statement of the conjecture is in the style of the presentation in section~5 of \cite{BMZ07} and is convenient for us. It is equivalent to the statement of Conjecture~1 in \cite{Zilber02esesc}.

\subsection{\ECF\ and CIT}

The axioms for \ECF\ are easily seen to be \Loo-expressible, and axioms 1 and 2$'$ are first-order, but it is not immediately clear whether  axioms 3$'$ and 4 are first-order. The notions of rotundity and multiplicative freeness of a subvariety which appear in axiom 4 are unconditionally first-order definable in the field language (but additive freeness is not) \cite[Theorem~3.2]{Zilber05peACF0}. Axiom 4 as a whole is first-order expressible assuming standard kernel \cite[Proposition~2.3]{NAPE}, but the proof there relies on being able to quantify over the standard integers, and does not generalise to \ECF.

We will show that axiom 3$'$ (the Schanuel Property over the kernel) is first-order axiomatizable if and only if CIT is true. In fact more is true.
\begin{theorem}\label{CIT implies ECF is first-order}
If CIT is true, then the axioms defining $\ECF$ are first-order expressible, and furthermore they axiomatise the complete theory $T_\B$. 
\end{theorem}
\begin{theorem}\label{CIT false implies ECF not fo}
If CIT is false, then \ECF\ is not an elementary class.
\end{theorem}
Axiom 4 is a technical strengthening of a more natural and simpler exponential-algebraic closedness property. Under CIT, this strengthening is not necessary.
\begin{theorem}\label{EAC theorem}
Assuming CIT, a model $F$ of axioms 1, 2$'$ and 3$'$ also satisfies axiom 4 (\seacness) if and only if it satisfies the following simpler version:
\begin{description}
\item[4$'$. Exponential-algebraic closedness] If $V$ is a rotund subvariety of $\ga^n\cross \gm^n$ defined over $F$ then there is $\xbar$ in $F$ such that $(\xbar,e^\xbar) \in V$.
\end{description}
\end{theorem}

In summary, if CIT is true then the effects of arithmetic can be contained within the kernel, and the theory $T_\B$ is otherwise tame, in fact \emph{superstable over the kernel} in a certain sense. However, if CIT is false then the effects of arithmetic will be seen in the first-order theory outside the kernel, the Schanuel property fails gravely in some models of the theory, the natural predimension notion loses its meaning, and any reasonable first-order stability seems to be impossible. Theorems~\ref{CIT implies ECF is first-order} and~\ref{CIT false implies ECF not fo} thus give us  a statement about exponentiation which is a highly non-obvious reformulation of CIT.
The more general form of CIT mentioned above is now known as the Zilber-Pink conjecture, and rather than concerning just multiplicative tori it deals with algebraic subgroups of semiabelian varieties and special subvarieties of Shimura varieties. We hope that ongoing work developing the ideas explained here will shed further light on the Zilber-Pink conjecture more broadly.

\subsection{Outline of the paper}
In section 2, we explain why the Schanuel property is not first-order expressible, then introduce the \SPOK\ and show that it is first-order expressible if and only if CIT is true. This proves Theorem~\ref{CIT false implies ECF not fo}.

In section 3, we recall many concepts about the algebra of exponential fields which were developed by the first author in \cite{FPEF}. In that paper only exponential fields with standard kernel were considered, and we also develop the theory as needed in the broader context of this paper. The notions of exponential algebraicity and exponential transcendence are also recalled from \cite{EAEF}. At the end of the section we explain all the terms needed for the statement of axiom 4, \seacness, and explain why it is the analogue for exponential fields of algebraic closedness for fields.

We introduce the concept of saturation over the kernel within the class \ECF\ in  section~4, and prove Theorem~\ref{unique sat model theorem}. Superstability over the kernel for the class \ECF\ follows. In section~5 we show that axiom 4 is first-order expressible assuming CIT (and under the other axioms) and thus the class \ECF\ is elementary assuming CIT. Again under CIT we prove completeness of the first-order theory, and the equivalence of axioms 4 and 4$'$, completing the proofs of Theorems~~\ref{CIT implies ECF is first-order} and~\ref{EAC theorem}. In section~6 we give some corollaries, and section~7 contains some final comments.

\subsection{Acknowledgement}

We would like to thank the anonymous referee who read the paper carefully and made many suggestions which greatly improved the presentation.

\section{The Schanuel property and the kernel}

There are two ways in which the Schanuel property might fail in an elementary extension of $\B$: inside the kernel or over the kernel. The first is mild and, as we will see, unavoidable. The second is a severe failing but happens only if CIT fails.

\subsection{Failure of the Schanuel property}

It is clear that axioms 1 and 2$'$ are expressible as first-order axioms schemes. However axiom 3, the Schanuel property, is not. To see this, suppose $F$ is an elementary extension of $\B$ with non-standard kernel. Then there is $r \in Z(F)$ which is transcendental, and since $Z(F)$ is a subring of $F$ we have $r^n \in Z(F)$ for all $n \in \N$. For any $t \in \ker(F)$, the set $\class{r^nt}{n \in \N}$ lies in $\ker(F)$ and is $\Q$-linearly independent, since $r$ is transcendental. The Schanuel property would imply that $\td(rt,r^2 t, \ldots,r^nt, 1,\ldots,1) = n$ for any $n \in \N$, but clearly this transcendence degree is at most 2. Hence the Schanuel property fails in all elementary extensions of \B\ with non-standard kernel.

\subsection{The \SPOK\ (SPOK)}

The above failure of the Schanuel property is inside the kernel. Axiom 3$'$, the \SPOK, essentially asserts that this is the only place where the Schanuel property can fail. We expressed the Schanuel property as the non-negativity of a predimension function $\delta$. The \SPOK\ can be expressed in the same form. Given an exponential field $F$, and a finite tuple $\xbar$ from $F$, define
\[\Delta_F(\xbar) \leteq \td(\xbar, \exp(\xbar) /  \ker(F)) - \ldim_\Q(\xbar /  \ker(F)). \]

Axiom 3$'$ asserts that $\Delta_F(\xbar) \ge 0$ for all finite tuples $\xbar$ from $F$.

If $F$ is algebraically closed then its multiplicative group $\gm(F) = \tuple{F^\times ; \cdot}$ is divisible and its torsion is the subgroup of roots of unity, which we denote by $\sqrt1$. Thus the quotient group $\gm(F)/\sqrt1$ is a $\Q$-vector space. If $A \subs \gm(F)$ then the $\Q$-linear dimension of its image $A /\sqrt1$ is called is \emph{multiplicative rank} and we denote it by $\mrk(A)$. Note that $e^a \in \sqrt1$ if and only if $a$ lies in the $\Q$-linear span of $\ker(F)$ and hence for any $\xbar$ in $F$ we have $\ldim_\Q(\xbar/\ker(F)) = \mrk(\exp(\xbar))$. Thus $\Delta_F(\xbar) = \td(\xbar,\exp(\xbar)/\ker(F)) - \mrk(\exp(\xbar))$.

Axiom 3$'$ can also be stated without using a predimension function.
\begin{lemma}
In any exponential field $F$, axiom 3$'$ holds if and only if for any $n$-tuple $\xbar \in F^n$, if $\td(\xbar,\exp(\xbar)/\ker(F)) < n$ then $\exp(\xbar)$ lies in a proper algebraic subgroup of $\gm^n(F)$.
\end{lemma}
\begin{proof}
A proper algebraic subgroup $H \subs \gm^n$ is given by a list of equations of the form 
\[ \hspace*{10em}    \prod_{i=1}^n y_i^{m_i} = 1 \hspace*{10em} (*)\]
where $m_i \in \Z$, not all zero. The left to right implication follows immediately and the right to left implication follows by induction on $n$.
\end{proof}

\subsection{Atypical intersections and the CIT}
We will show that the \SPOK\ is first-order expressible under the diophantine conjecture CIT. 

We may write the equations $(*)$ for a proper algebraic subgroup $H \subs \gm^n$ in matrix form as $\ybar^M = 1$.
We can define the \emph{depth} of $H$ to be the least $N \in \N$ such that $H$ is contained in some codimension 1 subgroup of $\gm^n$, which is given by a single equation of the form $(*)$ in which every $m_i$ satisfies $|m_i| \le N$. Then CIT can equivalently be stated as: for every irreducible subvariety $W \subs \gm^n$, defined over $\Q$, there is $N \in \N$ such that $W^{atyp}$ is contained in the union of the set of proper algebraic subgroups of $\gm^n$ of depth at most $N$.

Note that $W^{atyp}$ is certainly \Loo-definable (uniformly in parameters), but if CIT is true then the above shows that $W^{atyp}$ is first-order definable (even uniformly in parameters).

\subsection{Generic fibres}
We write $G$ for the algebraic group $\ga \cross \gm$. There is the natural projection map $\pr: G^n \to \gm^n$. Given $V \subs G^n$ irreducible and $b \in \pr(V)$, we consider the fibre $V(b) = \class{v\in V}{\pr(v) = b}$. Define 
\[V^{gf} = \class{v\in V}{\dim V(\pr(v)) = \dim V - \dim \pr(V)}\]
which is the union of the fibres of generic dimension. If $V$ is reducible, we define $V^{gf}$ to be the union of the $W^{gf}$ as $W$ runs through the irreducible components of $V$. The fibre dimension theorem (see for example section~2 of \cite{BMZ07} for a clear statement) tells us that $V^{gf}$ is constructible and dense in $V$, and if $V$ varies in a parametric family then $V^{gf}$ is definable uniformly in the parameters.

\subsection{Axiomatizing the \SPOK}
 For a subvariety $\mathcal{V} \subs G^n \cross \A^m$, where $\A$ is affine space, and $\pbar \in \A^m$, we write $V_\pbar \subs G^n$ for the fibre of $\mathcal{V}$ at $\pbar$, and consider $\mathcal{V} = (V_\pbar)_{\pbar \in \A^m}$ as a parametric family of subvarieties of $G^n$. For such a family, consider the axiom: 
\[(\forall \bar{k} \in \ker^m)(\forall \bar{x})\left[\dim V_\kbar \ge n \vee (\xbar,e^\xbar) \notin V_\kbar^{gf} \vee e^\xbar \in \pr(V_\kbar)^{atyp} \right]\]

We take the SPOK scheme to be the scheme of all such axioms, for all $n,m \in \N$, and for all families $\mathcal{V}$ defined over $\Q$. So every subvariety of $G^n$ defined over $\ker(F)$ will appear as $V_\kbar$ for some family $\mathcal{V}$ and some tuple $\kbar$ from $\ker(F)$. By the fibre dimension theorem, the SPOK scheme is expressible as an \Loo-sentence, and, if CIT is true, it is expressible as a first-order axiom scheme.

\begin{prop}\label{strong kernel is fo}
 Let $F$ be an exponential field. Then $F$ has the \SPOK\ if and only if it satisfies the SPOK scheme.
\end{prop}
\begin{proof}
 Suppose the scheme holds, and $\xbar \in F^n$ with $\td(\xbar,e^\xbar/\ker(F)) < n$. Let $V = \loc((\xbar,e^\xbar)/\ker(F))$. Then $\dim V < n$, so by the scheme, either $e^\xbar \in \pr(V)^{atyp}$ or $(\xbar,e^\xbar) \notin V^{gf}$. The latter is impossible, since $V \minus V^{gf}$ is constructible, defined over $\ker(F)$, and not dense in $V$, but $V$ is the locus of $(\xbar,e^\xbar)$ over $\ker(F)$. Hence $e^\xbar \in  \pr(V)^{atyp}$, which means that $e^\xbar$ lies in a proper algebraic subgroup of $\gm^n$, so $F$ has the \SPOK.

Conversely, suppose $F$ has the \SPOK, that $V \subs G^n$ is defined over $\ker(F)$ with $\dim V < n$, and that $(\xbar,e^\xbar) \in V^{gf}$. We must show $e^\xbar \in \pr(V)^{atyp}$. Let $H$ be the smallest algebraic subgroup of $\gm^n$ such that $e^\xbar \in H$. 
By the \SPOK\ we have
\[0 \le \Delta_F(\xbar) =  \td(\xbar,e^\xbar/\ker(F)) - \dim H.\]
Also $ \td(\xbar,e^\xbar/\ker(F)) = \td(\xbar/e^\xbar,\ker(F)) + \td(e^\xbar/\ker(F))$.

Let $X$ be the component of $\pr(V) \cap H$ containing $e^\xbar$, so we have $\td(e^\xbar/\ker(F)) \le \dim X$. Also $(\xbar,e^\xbar) \in V^{gf}$, so 
\[\td(\xbar/e^\xbar,\ker(F)) \le \dim V - \dim \pr(V) < n - \dim \pr(V).\]
Hence, putting these together, 
\[n - \dim \pr(V) + \dim (X) > \dim H\]
which means that $X$ is an atypical component of the intersection, and hence $e^\xbar \in \pr(V)^{atyp}$ as required.
\end{proof}

\subsection{Consequence of the failure of CIT}
We have seen that if CIT is true then the strong kernel property is first-order axiomatizable. Now we show the converse.

\begin{prop}\label{false CIT}
Suppose that CIT is false. Then no $F \in \ECF$ is $\aleph_0$-saturated.
\end{prop}
Theorem~\ref{CIT false implies ECF not fo} follows at once.
\begin{proof}
Assuming CIT is false, we will show that for any $F \in \ECF$ there is a type over a finite subset of $F$ such that any elementary extension of $F$ realising the type does not satisfy the \SPOK.

In \cite{Zilber02esesc} it is shown that if CIT is false then there is a counterexample defined over $\Q$. So suppose that $W \subs \gm^n$ is a counterexample to CIT, defined over $\Q$. So for each $N \in \N^+$, there is $H_N$, an algebraic subgroup of $\gm^n$ of depth strictly greater than $N$ such that 
\[ \dim(W \cap H_N) > \dim W + \dim H_N - n.\]
There are only finitely many possible values for $\dim H_N$ and for $\dim(W \cap H_N)$, so by passing to a subsequence of the $H_N$ we may assume for some $h$ and $t$ that, for all $N$, $\dim H_N = h$ and $\dim(W \cap H_N) = t$. 

We assume that $F$ has non-standard kernel. (If not, replace $F$ by an elementary extension.) Choose $r_{ij} \in Z(F)$, algebraically independent over $\Q$, for $i=1,\ldots,n, j=1,\ldots,t$. Let $X$ be the intersection of the generic hypersurfaces in $\gm^n$ given by the equations
$\sum_{i=1}^n r_{ij}y_i = 1$
for $j=1,\ldots,t$, and let $W' = W \cap X$. Then $\dim W' = \dim W - t$, and $\dim (W' \cap H_N) = \dim (W \cap H_N) - t = 0$ for each $N$. In particular, $W' \cap H_N$ is non-empty. Thus we have
\[h+ \dim W' < n.\]

Consider the following formulas $\phi_N(\xbar,\mbar)$ in free variables $x_1,\ldots,x_n$ and $(m_{ij})_{i,j = 1}^n$:
\begin{multline*}
\bigwedge_{i,j=1}^n m_{ij} \in Q \wedge M\xbar \in \ker^n \wedge \rk(M) = n-h \wedge e^\xbar \in W' \\
\wedge \bigwedge_{\bar{\mu} \in \{-N,\ldots,N\}^n \minus \{\bar{0}\}} \prod_{i=1}^n e^{\mu_i x_i} \neq 1
\end{multline*}
where $Q$ is the field of fractions of $Z$, and $M =(m_{ij})_{i,j = 1}^n$ considered as a matrix.

We show that each $\phi_N(\xbar,\mbar)$ is satisfiable in $F$. So take $H_N$ as above, and $\bbar \in (W \cap H_N)(F)$. Choose $M \in \Mat_{n\times n}(\Q)$ such that $H$ is given by $\ybar^M = 1$ and choose $\abar \in F^n$ such that $e^\abar = \bbar$, which is possible since $F$ is an ELA-field. Then $e^{M\abar} = \bar{1}$, so $M\abar \in \ker(F)^n$. Also $\rk(M) = n-h$ and $e^\abar \in W'$, but since $\bbar$ is generic in $W' \cap H_N$ (every point is generic because the dimension is 0) and the depth of $H_N$ is greater than $N$, we also have $\prod_{i=1}^n e^{\mu_i a_i} \neq 1$ whenever $\bar{\mu} \in \{-N,\ldots,N\}^n \minus \{\bar{0}\}$.

So each $\phi_N(\xbar,\mbar)$ is satisfiable in $F$, but then the set of formulas $\class{\phi_N(\xbar,\mbar)}{N \in \N}$ is finitely satisfiable, and hence by compactness it is satisfied in some elementary extension $K$ of $F$. Say $K \models \phi_N(\abar, \mbar)$ for all $N \in \N$.

Then since $M\abar \in \ker(K)$ and $M$ is a matrix over $Q(K)$ of rank $n-h$, we have $\ldim_{Q(K)}(\abar/\ker(K)) \le h$. Thus:
\begin{eqnarray*}
\td(\abar,e^\abar/\ker(K)) & \le & \td(\abar/\ker(K)) + \td(e^\abar/\ker(K)) \\
 & \le & \ldim_{Q(K)}(\abar/\ker(K)) + \td(e^\abar/\ker(K)) \\
& \le & h + \dim W' \\
& < & n.
\end{eqnarray*}
However, no nontrivial equations $\prod_{i=1}^n e^{\mu_ia_i} = 1$ hold where the $\mu_i \in \Z$, hence $\mrk(e^\abar) = n$, so $\ldim_\Q(\abar/\ker(K)) = n$, and so 
\[\Delta_K(\abar) = \td(\abar,e^\abar/\ker(K)) - \ldim_\Q(\abar/\ker(K)) < 0.\]
Thus $K$ does not have the \SPOK.
\end{proof}

\section{Some algebra of exponential fields}

The paper \cite{FPEF} of the first author develops the algebra of exponential fields and their extensions, under the restriction that the kernel is countable and does not extend. In this section we extend that work to consider also extensions where the kernel is uncountable and can extend.

For the proofs it is necessary to deal with partial exponential fields, where the exponential map is only partially defined. We start by considering them and properties of the kernel. \emph{Strong extensions} play a very important role in the earlier work and for the case where the kernel is not fixed we introduce the more general notion of \emph{semistrong} extensions. 
We then give two technical but essential results concerning the uniqueness of free ELA-extensions of a partial exponential field, analogous to the main technical result from \cite{FPEF}, following which we recall the notions of exponential algebraicity and exponential transcendence from another paper \cite{EAEF}. Finally we give a classification of the finitely generated kernel-preserving strong extensions of ELA-fields, which also serves to explain axiom 4, \seacness, and to define all the relevant terminology.

\begin{defn}\label{partial E-field defn}
  A \emph{partial E-field} is a two-sorted structure 
\[\tuple{F,D(F);+,\cdot,+_D,(q\cdot)_{q \in \Q},\alpha,\exp_F}\]
 where $\tuple{F;+,\cdot}$ is a field of characteristic zero, $\tuple{D(F);+_D,(q\cdot)_{q \in \Q}}$ is a \Q-vector space, $\ra{\tuple{D(F);+_D}}{\alpha}{\tuple{F;+}}$ is an injective homomorphism of additive groups, and $\ra{\tuple{D(F);+_D}}{\exp_F}{\tuple{F;\cdot}}$ is a homomorphism.
\end{defn}
We identify $D(F)$ with $\alpha(D(F))$, and hence a (total) E-field is a partial E-field. We write $I(F) = \class{\exp_F(a)}{a \in D(F)}$ for the image of the exponential map of $F$.

\subsection{Full and very full kernels}

We say that a partial (or total) E-field has \emph{full kernel} if and only if the group $\sqrt{1}$ of all roots of unity is contained in $I(F)$. 

As abstract abelian groups, $D(F)$ and $I(F)$ are both divisible. $D(F)$ is torsion-free, and the torsion in $I(F)$ is equal to $\sqrt{1} \cap I(F)$, which is the image of the $\Q$-linear span of the kernel. Considering the torsion, we see that $F$ has full kernel if and only if for each $n \in \N^+$, the quotient $\ker(F) / n \ker(F)$ is cyclic of order $n$. Equivalently, $\tuple{\ker(F);+}$ is a model of the complete theory of $\tuple{\Z;+}$.

We now give some relevant properties of this theory. All references are to \cite{Rothmaler}. Any model $M$ of  $\Th\tuple{\Z;+}$ splits as a direct sum $M_r \oplus M_d$ where $M_d$ is the largest divisible subgroup of $M$ and $M_r$ is called a \emph{reduced part} of $M$. We have $M_r \elsubs M$ \cite[Lemma~15.5.5(a)]{Rothmaler}.

Let $\hat{\Z}$ be the profinite completion of $\tuple{\Z;+}$. Then $\hat{\Z} \models \Th\tuple{\Z;+}$, and if $M$ is any model of $\Th\tuple{\Z;+}$ then $M_r$ embeds elementarily into $\hat{\Z}$ [15.5.5(a) and~15.6.2(1)]. (In fact, in the cited reference, $\hat{\Z}$ is defined differently, but the equivalence of the definitions is given on page 265.)

An abelian group is said to be \emph{algebraically compact} if is is saturated with respect to systems of certain (positive primitive) formulas. A model $M$ of $\Th\tuple{\Z;+}$ is algebraically compact if and only if $M_r$ is algebraically compact [15.5.5(2)] if and only if $M_r \iso\hat{\Z}$ [15.6.2(3)]. Furthermore, if $M$ is $\aleph_0$-saturated then it is algebraically compact [15.5.3(1)].

\begin{defn}
 We say that $F$ has \emph{\vfk} if and only if $\ker(F)$ is an algebraically compact model of $\Th\tuple{\Z;+}$.
\end{defn}



\begin{defn}
A \emph{coherent system of roots of unity} is a sequence $(c_m)_{m\in\N^+}$ such that $c_1 = 1$ and for each $r,m \in \N^+$ we have $c_{rm}^r = c_m$.
\end{defn}

\begin{lemma}\label{vfk lemma}
A partial E-field $F$ has \vfk\ if and only if for any coherent system of roots of unity $(c_m)_{m\in\N^+}$ there is $a \in D(F)$ such that for each $m\in\N^+$ we have $\exp_F(a/m) = c_m$.
\end{lemma}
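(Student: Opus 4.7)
The plan is to reinterpret the right-hand condition as surjectivity of a natural map $\Phi \colon \ker(F) \to \hat{\Z}(1)$, where $\hat{\Z}(1) \leteq \varprojlim_m \mu_m$ is the inverse limit of the groups $\mu_m$ of $m$-th roots of unity taken along the bonding maps $\mu_{rm} \to \mu_m$, $x \mapsto x^r$. Elements of $\hat{\Z}(1)$ are exactly coherent systems of roots of unity, and abstractly $\hat{\Z}(1) \iso \hat{\Z}$ as $\Z$-modules. Define $\Phi(a) \leteq (\exp_F(a/m))_{m\in\N^+}$; this is well-defined on $\ker(F)$ since $\exp_F(a/m)^m = \exp_F(a) = 1$, and any $a \in D(F)$ witnessing the stated lifting condition automatically lies in $\ker(F)$ (take $m=1$). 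The lifting property is therefore equivalent to surjectivity of $\Phi$.

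For $(\Rightarrow)$, suppose $F$ has \vfk, witnessed by a pure embedding $\iota \colon \hat{\Z} \hookrightarrow \ker(F)$. Each component $\phi_m \colon \ker(F) \to \mu_m$ of $\Phi$ is surjective (by full kernel) with kernel $m\ker(F)$. Purity of $\iota$ gives $\iota^{-1}(m\ker(F)) = m\hat{\Z}$, so $\phi_m \circ \iota$ descends to an injection $\hat{\Z}/m\hat{\Z} \hookrightarrow \mu_m$ between groups of the same order, hence an isomorphism. These isomorphisms are compatible with the bonding maps in both inverse systems, so passing to the limit yields an isomorphism $\Phi \circ \iota \colon \hat{\Z} \riso \hat{\Z}(1)$, and in particular $\Phi$ is surjective.

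For $(\Leftarrow)$, suppose $\Phi$ is surjective. Its kernel is $K = \bigcap_{m\in\N^+} m\ker(F)$, which is torsion-free since $F$ has characteristic zero. To see $K$ is divisible, fix $k \in K$ and $n \in \N^+$: for each $m$, $k \in (nm)\ker(F)$, say $k = nm\ell_m$, so $k/n = m\ell_m \in m\ker(F)$ for all $m$, i.e.\ $k/n \in K$. Hence $K$ is a $\Q$-vector space, and so is injective as an abelian group. The short exact sequence
\[ 0 \longrightarrow K \longrightarrow \ker(F) \stackrel{\Phi}{\longrightarrow} \hat{\Z}(1) \longrightarrow 0 \]
splits, giving $\ker(F) \iso K \oplus \hat{\Z}(1) \iso K \oplus \hat{\Z}$. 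Thus $\hat{\Z}$ is a direct summand---and a fortiori a pure subgroup---of $\ker(F)$, so $F$ has \vfk.

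The subtle step will be the purity argument in the forward direction, since without it one would only obtain a homomorphism $\hat{\Z}/m\hat{\Z} \to \mu_m$ of possibly nontrivial kernel rather than an isomorphism. The reverse direction reduces to standard facts: divisibility of $K$ together with injectivity of $\Q$-vector spaces in the category of abelian groups, yielding the splitting.
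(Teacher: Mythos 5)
Your proof is correct, and it takes essentially the same route as the paper: the paper's proof is the single observation ``Each system $(c_m)_{m\in\N^+}$ corresponds to an element of $\hat\Z$,'' which is precisely the identification $\hat{\Z}(1)\cong\hat{\Z}$ underlying your map $\Phi$, and the rest of your argument supplies the bookkeeping (purity giving $\iota^{-1}(m\ker(F))=m\hat{\Z}$ in one direction, divisibility of $\bigcap_m m\ker(F)$ and the resulting splitting in the other) that the paper leaves implicit.
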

\begin{proof}
Write $\sqrt[m]1$ for the group of $m^\mathrm{th}$ roots of unity. Define a group homomorphism by
\map{\ker(F)}{\theta}{\prod_{m \in \N^+}\sqrt[m]{1}}{a}{(\exp_F(a/m))_{m\in\N^+}}
Recall that $\tau$ is the cyclic generator of the kernel, and so $e^{\tau/m}$ is a primitive $m^{\mathrm{th}}$ root of unity for each $m \in \N^+$. So we can define isomorphisms $\sqrt[m]1 \iso \Z/m\Z$ by $e^{\tau/m} \mapsto 1 + m\Z$, which together make an isomorphism 
\[\prod_{m \in \N^+}\sqrt[m]{1} \iso \prod_{m \in \N^+} \Z/m\Z\]
and we identify these groups along the isomorphism. Then for each $a \in \ker(F)$, $\theta(a)$ is a coherent system of roots of unity, which is exactly the same condition as being an element of $\hat{\Z}$. The kernel of $\theta$ is the divisible part of $\ker(F)$, so $\theta$ restricts to an embedding of the reduced part of $\ker(F)$ into $\hat{\Z}$, which has no proper self-embeddings. Thus the image of $\theta$ contains every coherent system of roots of unity if and only if $F$ has \vfk, as required.
\end{proof}

\subsection{Kernel extensions}
\begin{prop}\label{ker ext prop}
Let $F$ be a partial E-field with full kernel, and let $A$ be a subgroup of $\ga(F)$ such that $A \cap D(F) = \ker(F)$, and $A \models \Th\tuple{\Z;+}$. Then there is a unique partial E-field extension $F'$ of $F$ with the same underlying field, such that $D(F')$ is spanned by $D(F) \cup A$ and $\ker(F') = A$.
\end{prop}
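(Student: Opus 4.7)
The plan is to take $D(F') := D(F) + \Q A$ as a $\Q$-subspace of $F = \ga(F)$ and extend $\exp_F$ by force. The first observation is that $D(F) \cap \Q A = K$, where $K := \exp_F^{-1}(\sqrt{1})$: if $y = a/n$ with $a \in A$ lies in $D(F)$, then $a = ny \in D(F) \cap A = \ker(F)$, so $\exp_F(y)^n = 1$ and $y \in K$; the reverse inclusion is clear from full kernel.

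The key algebraic lemma is $\Q A = A + K$. For each $n$, the same reasoning gives $nA \cap D(F) = n\ker(F)$ (if $na \in D(F)$, then $a = (na)/n \in D(F) \cap A = \ker(F)$), so $\ker(F) \cap nA = n\ker(F)$. Hence the natural map $\ker(F)/n\ker(F) \to A/nA$ is an injection; but both groups are cyclic of order $n$ (source by full kernel, target by the Presburger hypothesis on $A$), so it is an isomorphism. This gives $A = \ker(F) + nA$. Writing any $y = a/n$ as $y = a_0 + t/n$ with $t = a - na_0 \in \ker(F)$, and observing $t/n \in K$ because $\exp_F(t/n)^n = \exp_F(t) = 1$, we conclude $\Q A \subseteq A + K$; the reverse inclusion is trivial.

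Since $K \subseteq D(F)$, we then have $D(F') = D(F) + \Q A = D(F) + A$. Any $z \in D(F')$ can be written as $z = d + a$ with $d \in D(F)$ and $a \in A$, and because any two such decompositions differ by an element of $D(F) \cap A = \ker(F)$, the prescription $\exp_{F'}(d + a) := \exp_F(d)$ is well-defined. It clearly extends $\exp_F$, is a group homomorphism $D(F') \to F^\times$, and has $A$ in its kernel; conversely any partial E-field extension of $\exp_F$ to $D(F')$ with $\ker(F') \supseteq A$ must satisfy the same formula, giving uniqueness. For the reverse containment $\ker(\exp_{F'}) \subseteq A$: if $\exp_F(d) = 1$ then $d \in \ker(F) \subseteq A$, so $z = d + a \in A$.

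The main obstacle is the algebraic identity $\ker(F) + nA = A$ that powers the decomposition $\Q A = A + K$; it rests essentially on the interplay of all three hypotheses (full kernel of $F$, Presburger structure of $A$, and the transversality $A \cap D(F) = \ker(F)$). Once that identity is in hand, the rest of the proof is formal.
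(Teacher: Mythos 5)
Your proof is correct and ultimately rests on the same two hypotheses as the paper's (full kernel of $F$, and purity of $\ker(F)$ in the Presburger group $A$), but it is packaged slightly differently. The paper writes a general element of $D(F')$ as $a/m + b$ with $a \in A$, $b \in D(F)$, and defines $\exp_{F'}$ by uniquely extending the surjection $\frac{1}{m}\ker(F) \twoheadrightarrow \sqrt[m]{1}$ to a homomorphism $\frac{1}{m}A \to \sqrt[m]{1}$, using purity. You instead first prove the identity $A = \ker(F) + nA$ (via the isomorphism $\ker(F)/n\ker(F) \cong A/nA$), deduce $\Q A = A + K$ and hence $D(F') = D(F) + A$ with no fractions, and then write the extension formula simply as $\exp_{F'}(d+a) = \exp_F(d)$, well-defined by $D(F) \cap A = \ker(F)$. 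The two constructions produce the same map: with $a = t + m a_0$ ($t \in \ker(F)$, $a_0 \in A$) one checks $\theta(a/m)\exp_F(b) = \exp_F(t/m)\exp_F(b) = \exp_F(t/m+b)$. Your version has the small advantage of making the extension formula and the uniqueness argument visibly fraction-free, at the cost of the preliminary lemma $D(F') = D(F)+A$; the paper's version gets to the formula faster but leaves the purity-based extension of $\theta$ to the reader. Both are fine, and yours is arguably the cleaner write-up.
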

\begin{proof}
Any element of $D(F')$ is of the form $x = a/m + b$ with $a \in A$, $m \in \N^+$, and $b \in D(F)$. The exponential map $\exp_F$ restricts to a surjection $\ra{\frac{1}{m}\ker(F)}{}{\sqrt[m]{1}}$.  Since $A$ is a model of $\Th\tuple{\Z;+}$ and a pure extension of $\ker(F)$, this surjection extends uniquely to a homomorphism $\ra{\frac{1}{m}A}{\theta}{\sqrt[m]{1}}$. 

Now $\exp_{F'}(x)$ must satisfy 
\[\exp_{F'}(x) = \exp_{F'}(a/m)\exp_{F'}(b) = \theta(a/m) \exp_F(b)\]
so $\exp_{F'}$ is unique, and it is well-defined because $A \cap D(F) = \ker(F)$.
\end{proof}

The special case of this proposition which we actually use in this paper is as follows.
\begin{cor}\label{Fker cor}
Suppose $F_0$ is a partial E-field satisfying axioms 2$'$ and 3$'$. Let $R \models \Th\tuple{\Z;+,\cdot}$ be a ring extension of $Z(F_0)$ such that $Z(F_0)$ is relatively algebraically closed in $R$. Then, up to isomorphism, there is a unique partial E-field extension $F'$ of $F_0$ such that $Z(F') \iso_{Z(F_0)} R$, $F'$ is generated as a partial E-field by $F_0 \cup Z(F')$, and $\algindep{F_0}{\ker(F')}{\ker(F_0)}$.
\end{cor}
\begin{proof}
Let $F$ be the field extension of $F_0$ generated by a copy of $R$ which is algebraically disjoint from $F_0$ over (the relatively algebraic closure of) $Z(F_0)$. It is unique up to isomorphism. Now apply Proposition~\ref{ker ext prop} with $A = \tau R$, where $\tau$ is the kernel generator.
\end{proof}

\subsection{Strong and semistrong extensions}

Let $A$ be a subset of a partial E-field $F$. We use the notation $\gen{A}_F$ for the smallest partial E-subfield of $F$ containing $A$, and $\gen{A}^{ELA}_F$ for the smallest partial E-subfield $K$ of $F$ containing $A$ which is closed under exponentiation, taking logarithms (that is, if $b \in K$, $a\in F$ and $\exp_F(a) = b$ then $a \in K$), and is relatively algebraically closed in $F$. In particular, if $F$ is an ELA-field then $\gen{A}^{ELA}_F$ is the smallest ELA-subfield of $F$ containing $A \cup \ker(F)$.

Recall that the Schanuel property for $F$ states that the predimension function $\delta(\abar) = \td(\abar,e^\abar) - \ldim_\Q(\abar)$ is non-negative for all tuples $\abar$ from $F$. If $A$ is a subset of a partial (or total) exponential field $F$ and $\abar$ is a finite tuple from $D(F)$, we also consider the \emph{relative predimension function}
\[\delta(\abar/A) = \td(\abar,\exp(\abar) / A,\exp(A)) - \ldim_\Q(\abar/A).\]

\begin{defn}
We say that $A$ is \emph{strong} in $F$, and write $A \strong F$, if and only if for every finite tuple $\abar$ from $D(F)$, $\delta(\abar/A) \ge 0$. We define a partial E-subfield $F_0$ of $F$ to be \emph{strong}, and write $F_0\strong F$, if and only if for every finite tuple $\abar$ from $F$, $\delta(\abar/D(F_0)) \ge 0$. 
\end{defn}
So if $A \subs D(F)$ and $F_0 = \gen{A}_F$, then $F_0 \strong F$ as a partial E-subfield precisely when $A \strong F$ as a subset.

The predimension function $\Delta_F$ satisfies $\Delta_F(\abar) = \delta(\abar/\ker(F))$, and hence the Schanuel property over the kernel is equivalent to the assertion that $\ker(F)$ is strong in $F$.

The notion of strong extensions was extremely important in the paper \cite{FPEF}, but extensions of exponential fields where the kernel extends will not generally be strong. However, much of the technology of strong extensions can be adapted to this broader setting. We define \emph{semistrong} extensions to play the central role.

\begin{defn}
Let $F_0 \subs F$ be an extension of partial E-fields, and $\abar$ a finite tuple from $F$. We define the predimension function
\[ \Delta_{F}(\abar/F_0) = \td(\abar, \exp_{F}(\abar)/ F_0 \cup \ker(F)) - \ldim_\Q(\abar \cap D(F) / D(F_0)\cup \ker(F)) \]
where $\exp_{F}(\abar) = \class{\exp_F(a)}{a \in \abar \cap D(F)}$.

If $A$ is a subset of $F$,  we define $\Delta_{F}(\abar/A) \leteq \Delta_{F}(\abar/F_0)$ where $F_0 = \gen{\ker(F)\cup A}_{F}$, the partial E-subfield of $F$ generated by $\ker(F) \cup A$.

\end{defn}

The following properties are almost immediate from the definition.
\begin{lemma}[Basic properties of the predimension function] \ 
\begin{enumerate}[(a)]
\item $\Delta_F(\abar/A) = \td(\abar,\exp(\abar)/A \cup \ker(F)) - \mrk(\exp_F(\abar)/I(A))$
\item Addition property: for finite tuples $\abar$ and $\bbar$, 
\[\Delta_F(\abar \cup \bbar/A) = \Delta_F(\abar/A \cup \bbar) + \Delta_F(\bbar/A).\]

\item Suppose $F_1 \subs F_2 \subs F_3$ are partial E-fields, that $\abar \in D(F_2)$ , and that $\algindep{F_2}{\ker(F_3)}{\ker(F_2)}$, which means that every finite tuple $\bbar \in F_2$ satisfies $\td(\bbar/\ker(F_2)\cup \ker(F_3)) = \td(\bbar/\ker(F_2))$. Then $\Delta_{F_3}(\abar/F_1) = \Delta_{F_2}(\abar/F_1)$. 
\end{enumerate}\qed
\end{lemma}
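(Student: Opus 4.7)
The plan is to verify each of the three items by unpacking the definition of $\Delta_F(\xbar/A)$ and invoking standard additivity facts for transcendence degree and $\Q$-linear dimension, together with the defining property of the exponential homomorphism.

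For (a), I would observe that $\exp_F$ is a surjective group homomorphism $D(F) \to I(F)$ with kernel $\ker(F)$, and that the torsion subgroup of $I(F)$ is exactly the image of $\Q\cdot\ker(F)$: indeed $a\in D(F)$ satisfies $\exp_F(a)^n=1$ for some $n\in\N^+$ iff $na\in\ker(F)$, iff $a\in\Q\cdot\ker(F)$. Since $I(F)$ is divisible, $\exp_F$ descends to an isomorphism of $\Q$-vector spaces $D(F)/\Q\cdot\ker(F) \iso I(F)\otimes_\Z\Q$. Under this isomorphism, a $\Q$-linear dependence of the components of $\xbar\cap D(F_1)$ modulo $D(A)+\ker(F_1)$ corresponds exactly to a multiplicative dependence of the corresponding $\exp_F(x_i)$ modulo $I(A)$ up to torsion. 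Therefore $\ldim_\Q(\xbar\cap D(F_1)/D(A)\cup\ker(F_1)) = \mrk(\exp_F(\xbar)/I(A))$, and substituting into the definition of $\Delta_F(\xbar/A)$ gives (a).

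For (b), I would apply the usual additivity of transcendence degree and of $\Q$-linear dimension termwise. Setting $C = A\cup\ker(F)$, additivity of $\td$ yields
\[\td(\xbar\ybar,\exp(\xbar\ybar)/C) = \td(\xbar,\exp(\xbar)/C\cup\ybar\cup\exp(\ybar)) + \td(\ybar,\exp(\ybar)/C),\]
and the analogous statement for $\ldim_\Q$ with base $D(A)\cup\ker(F)$ holds as well. Since $\exp_F(\xbar\cup\ybar)$ decomposes as $\exp_F(\xbar)\cup\exp_F(\ybar)$ on components lying in $D(F_1)$, these two additivities combine to give $\Delta_F(\xbar/A\cup\ybar) + \Delta_F(\ybar/A)$.

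For (c), the hypothesis $\algindep{F_2}{\ker(F_3)}{\ker(F_2)}$ says that no tuple from $F_2$ picks up algebraic relations over $\ker(F_2)$ when we enlarge the base to $\ker(F_2)\cup\ker(F_3)$; and since $\xbar\in D(F_2)$ we have $\exp_{F_3}(\xbar) = \exp_{F_2}(\xbar)$. This immediately gives that the $\td$ term of $\Delta$ computed in $F_3$ equals that computed in $F_2$. For the $\ldim_\Q$ term, algebraic independence of $F_2$ from $\ker(F_3)$ over $\ker(F_2)$ forces $\Q$-linear independence, so the dimension of $\xbar$ modulo $D(F_1)\cup\ker(F_2)$ coincides with that modulo $D(F_1)\cup\ker(F_3)$. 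Subtracting gives $\Delta_{F_3}(\xbar/F_1) = \Delta_{F_2}(\xbar/F_1)$.

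The content is essentially definitional; the main thing to watch for is the bookkeeping in part (a), namely the translation between the additive formulation in terms of $\ldim_\Q$ and the multiplicative one in terms of $\mrk$, via the isomorphism $D(F)/\Q\cdot\ker(F)\iso I(F)\otimes_\Z\Q$. Once that identification is in hand, (b) and (c) follow immediately from standard additivity and the definition of algebraic independence.
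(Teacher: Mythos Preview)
Your proposal is correct and matches the paper's approach: the paper does not give a proof at all, simply remarking that the properties are ``almost immediate from the definition'' and closing the lemma with a \qed, which is exactly what your unpacking of the definitions confirms. Your handling of part~(a), identifying $\ldim_\Q$ over $D(A)\cup\ker(F)$ with $\mrk$ over $I(A)$ via the induced isomorphism $D(F)/\Q\cdot\ker(F)\iso I(F)\otimes_\Z\Q$, is the only nontrivial step and is done correctly.
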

Note that the function $\Delta_F$ does generally depend on $F$, but (c) above gives an important situation where there is no dependence.

\begin{defn}
  Let $\theta: F_0 \into F$ be an extension of partial E-fields. We say that the extension is \emph{semi-strong} and write $F_0 \rInto^{\sstrong} F$ or just $F_0 \sstrong F$ if and only if for every $\abar \in F$, $\Delta_F(\abar/\theta(F_0)) \ge 0$, and $\algindep{F_0}{\ker(F)}{\ker(F_0)}$.
\end{defn}
We will make use of the following basic properties of semistrong extensions, which again are almost immediate from the definition. From the last property in the list, we see that they also apply to strong extensions where the kernel does not extend.
\begin{lemma}[Basic properties of semi-strong extensions]\label{ss basic props} \
\begin{enumerate}[(a)]
\item For any partial E-field $F$, $F \sstrong F$
\item If $F_1 \sstrong F_2$ and $F_2 \sstrong F_3$ then $F_1 \sstrong F_3$.
\item $A \sstrong F$ if and only if for every finite tuple $\abar$ from $F$, $A \sstrong \gen{A \cup \abar}_F$.
\item If $F_1 \subs F_2$, $F_1 \sstrong F_3$ and $F_2 \sstrong F_3$ then $F_1 \sstrong F_2$.
\item\label{ss union lemma} Suppose $F_1 \subs F_2 \subs \cdots \subs F_\alpha \subs \cdots $ is a chain of partial E-subfields of some partial E-field $F$, and each $F_\alpha \sstrong F$. Then $\bigcup_{\alpha} F_\alpha \sstrong F$.
\item If $A \sstrong F$ and $\ker(F) \subs A$ then $A \strong F$.
\end{enumerate}\qed
\end{lemma}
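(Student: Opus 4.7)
The plan is to verify each item by direct predimension-calculus arguments using the addition and comparison formulas (parts (b) and (c) of the preceding lemma), while tracking the algebraic-independence condition $\algindep{F}{\ker(F_1)}{\ker(F)}$ in parallel with the predimension inequality. Properties (a) and (f) are immediate: in (a) both conditions are trivial for the identity embedding; in (f) the kernel-preserving hypothesis $\ker(A)=\ker(F)$ reduces $\Delta_F(\cdot/A)$ to $\delta(\cdot/A)$ and makes the algebraic-independence condition vacuous. Property (c) is a finite-character restatement, relying on the fact that each instance of $\Delta\ge 0$ and of the kernel-independence is witnessed by finitely many elements; and (e) is the corresponding statement that these finite witnesses propagate through a directed union.

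Property (d) is a direct application of basic property (c): since $\ker(F_2)\subs\ker(F_3)$, the independence $\algindep{F_1}{\ker(F_2)}{\ker(F_1)}$ is inherited by restriction from $\algindep{F_1}{\ker(F_3)}{\ker(F_1)}$, and then basic property (c), whose hypothesis $\algindep{F_2}{\ker(F_3)}{\ker(F_2)}$ is provided by $F_2\sstrong F_3$, gives $\Delta_{F_2}(\xbar/F_1)=\Delta_{F_3}(\xbar/F_1)\ge 0$ for any finite $\xbar\subs F_2$.

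The substantive assertion is the transitivity (b). The kernel condition follows from transitivity of algebraic independence along $\ker(F_1)\subs\ker(F_2)\subs\ker(F_3)$: $\algindep{F_1}{\ker(F_2)}{\ker(F_1)}$ is given, and $\algindep{F_1}{\ker(F_3)}{\ker(F_2)}$ follows by restriction from $\algindep{F_2}{\ker(F_3)}{\ker(F_2)}$, so by transitivity $\algindep{F_1}{\ker(F_3)}{\ker(F_1)}$. For the predimension inequality, given a finite tuple $\xbar\subs F_3$, I would choose by finite character a finite tuple $\ybar\subs F_2$ with $\Delta_{F_3}(\xbar/F_1\cup\ybar)=\Delta_{F_3}(\xbar/F_2)$; the addition formula together with basic property (c) then yields
\[\Delta_{F_3}(\xbar\cup\ybar/F_1)=\Delta_{F_3}(\xbar/F_2)+\Delta_{F_2}(\ybar/F_1)\ge 0.\]
The main obstacle is the final descent from $\Delta_{F_3}(\xbar\cup\ybar/F_1)\ge 0$ to $\Delta_{F_3}(\xbar/F_1)\ge 0$, since the addition formula applied in the opposite order leaves a possibly negative term $\Delta_{F_3}(\ybar/F_1\cup\xbar)$. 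I would handle this by a standard Hrushovski-style minimality argument: choose $\ybar$ so that $\Delta_{F_3}(\xbar\cup\ybar/F_1)$ is minimised among all finite $\ybar\subs F_2$ with the above approximation property, which by the addition formula forces $\Delta_{F_3}(\ybar'/F_1\cup\xbar\cup\ybar)\ge 0$ for all finite $\ybar'\subs F_2$, and a short further calculation then delivers the required bound.
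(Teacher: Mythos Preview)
The paper gives no proof of this lemma (it ends with \qed), so there is nothing explicit to compare against; the authors regard all six parts as routine consequences of the definitions and the preceding ``basic properties of $\Delta$'' lemma. Your treatment of (a), (c), (d), (e), (f) is correct and matches what one would expect.

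For part (b), however, you have correctly located the genuine obstacle --- the descent from $\Delta_{F_3}(\xbar\cup\ybar/F_1)\ge 0$ to $\Delta_{F_3}(\xbar/F_1)\ge 0$ --- but your minimality argument does not close it. Minimising $\Delta_{F_3}(\xbar\cup\ybar/F_1)$ over $\ybar\subs F_2$ with the approximation property yields $\Delta_{F_3}(\ybar'/F_1\cup\xbar\cup\ybar)\ge 0$ for all finite $\ybar'\subs F_2$; but what you need is a bound on $\Delta_{F_3}(\ybar/F_1\cup\xbar)$ from \emph{above}, and the minimality condition gives no such control. Your ``short further calculation'' is not specified and I do not see one along these lines.

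The standard fix is to choose the auxiliary tuple more carefully: take $\bbar\subs D(F_2)$ so that $\bbar$ spans
\[
\bigl(\mathrm{span}_\Q(D(F_1)\cup\ker(F_3)\cup\xbar)\bigr)\cap D(F_2)
\]
over $D(F_1)\cup\ker(F_3)$. Because $\bbar$ lies in the $\Q$-span of $D(F_1)\cup\ker(F_3)\cup\xbar$, adjoining it changes neither the transcendence-degree nor the linear-dimension term, so $\Delta_{F_3}(\xbar\cup\bbar/F_1)=\Delta_{F_3}(\xbar/F_1)$ outright, and no descent step is needed. The addition formula then gives
\[
\Delta_{F_3}(\xbar/F_1)=\Delta_{F_3}(\xbar/\gen{F_1\cup\bbar})+\Delta_{F_3}(\bbar/F_1).
\]
By the choice of $\bbar$ the linear-dimension parts of $\Delta_{F_3}(\xbar/\gen{F_1\cup\bbar})$ and $\Delta_{F_3}(\xbar/F_2)$ coincide, while the transcendence-degree part can only be larger over the smaller base; hence $\Delta_{F_3}(\xbar/\gen{F_1\cup\bbar})\ge\Delta_{F_3}(\xbar/F_2)\ge 0$. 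The second summand equals $\Delta_{F_2}(\bbar/F_1)\ge 0$ by basic property (c) and $F_1\sstrong F_2$. This is presumably the routine argument the authors had in mind.
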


We need one further property, the existence of a smallest semistrong extension containing a given tuple.
\begin{prop}\label{hull}
Let $F$ be an ELA-field, let $A \sstrong F$ and let $\bbar$ be a finite tuple from $F$. Then there is a smallest partial E-subfield $B$ of $F$ such that $D(A) \cup \ker(F) \cup \bbar \subs D(B)$ and $B \sstrong F$. That is, if $C$ is any other such partial E-subfield then $D(B) \subs D(C)$.
\end{prop}
\begin{proof}
Exactly the same as for Lemma~7.2 of \cite{FPEF}, using the predimension $\Delta_F$ in place of $\delta$.
\end{proof}

\subsection{The free ELA-extension}
In \cite{FPEF}, it is shown that certain countable partial E-fields have well-defined free ELA-extensions. (Recall that these are algebraically closed extension fields whose exponential map is surjective.) Here we prove the same conclusion replacing the countability and the other hypotheses with the single assumption of \vfk.

\begin{prop}\label{vfk prop}
  Suppose that $F$ is a partial E-field with \vfk, which is algebraic over its graph of exponentiation, that is, $F$ is algebraic over $D(F) \cup I(F)$. Suppose also that $F \strong K$ and $F \strong M$ are two kernel-preserving strong extensions into ELA-fields. Then $\gen{F}_K^{ELA} \iso_F \gen{F}_M^{ELA}$. In particular:
\begin{enumerate}
\item[(1)] If $F$ is a partial E-field with \vfk\ then the free ELA-closure of $F$, $F^{ELA}$, is well-defined.
\item[(2)] If $F$ is a partial E-field with \vfk\ and $F \strong K$ is a kernel-preserving extension to an ELA-field then $\gen{F}_K^{ELA} \iso_F F^{ELA}$.
\end{enumerate}
\end{prop}

\begin{proof}
First note that (1) and (2) follow at once from the main statement of the proposition, because there does exist some strong ELA-field extension of $F$ with no new kernel elements  \cite[Construction~2.13]{FPEF}.

List $\gen{F}_K^{ELA}$ as $(s_{\alpha+1})_{\alpha<\lambda}$ for $\lambda = |\gen{F}^{ELA}_K|$, such that for each $\alpha < \lambda$, either
\begin{enumerate}[i)]
 \item $s_{\alpha+1}$ is algebraic over $F \cup \class{s_\beta}{\beta\le \alpha}$; or
 \item $s_{\alpha+1} = \exp_K(a)$ for some $a \in F \cup \class{s_\beta}{\beta\le \alpha}$; or
 \item $\exp_K(s_{\alpha+1}) = b$ for some $b\in F \cup \class{s_\beta}{\beta\le \alpha}$.
\end{enumerate}
This is possible by the definition of $\gen{F}_K^{ELA}$. 

We will inductively construct chains of partial E-subfields \[F= K_0 \subs K_1 \subs K_2 \subs \cdots \subs K_\alpha \subs \cdots\]
 of $K$ and
\[F= M_0 \subs M_1 \subs M_2 \subs \cdots \subs M_\alpha \subs \cdots\]
of $M$, and nested isomorphisms $\theta_\alpha: K_\alpha \to M_\alpha$ such that for each $\alpha < \lambda$ we have $s_\alpha \in K_\alpha$, $K_\alpha \strong K$, $M_\alpha \strong M$, and $K_\alpha$ is algebraic over its graph of exponentiation.

We start by taking $\theta_0$ to be the identity map on $F$. Now suppose we have $K_\beta$, $M_\beta$, and $\theta_\beta$ for all $\beta <\alpha$. If $\alpha$ is a limit, take unions. All the inductive hypotheses are easily seen to hold. Now consider $\alpha = \gamma +1$.

\medskip
\paragraph{\textbf{Case 1)}} $s_{\gamma+1}$ is algebraic over $K_\gamma$ (including the case where $s_{\gamma+1} \in K_\gamma$). Let $p(X)$ be the minimal polynomial of $s_{\gamma+1}$ over $K_\gamma$. The image $p^\theta$ of $p$ is an irreducible polynomial over $M_\gamma$, so let $t$ be any root of $p^\theta$ in $M$. Let $K_{\gamma+1} = K_\gamma(s_{\gamma+1})$, $M_{\gamma+1} = M_\gamma(t)$, and let $\theta_{\gamma+1}$ be the unique field isomorphism extending $\theta_\gamma$ and sending $s_{\gamma+1}$ to $t$. We make $K_{\gamma+1}$ and $M_{\gamma+1}$ into partial exponential fields by taking the graph of exponentiation to be the graph of $\exp_K$ or $\exp_M$ intersected with $K_{\gamma+1}^2$ or $M_{\gamma+1}^2$ respectively. Suppose that $(a,\exp_K(a)) \in K_{\gamma+1}^2$. Since $K_\gamma \strong K$, we have $\td(a,\exp_K(a)/K_\gamma) - \ldim_\Q(a/D(K_\gamma)) \ge 0$. But $K_{\gamma+1}$ is an algebraic extension of $K_\gamma$, so it follows that $\ldim_\Q(a/D(K_\gamma)) = 0$, that is, that $a \in D(K_\gamma)$. Hence $D(K_{\gamma+1}) = D(K_\gamma)$. The same argument shows that $D(M_{\gamma+1}) = D(M_\gamma)$. Now if $\xbar$ is any tuple from $K$, we have $\delta(\xbar/D(K_{\gamma+1})) = \delta(\xbar/D(K_\gamma)) \ge 0$, and hence $K_{\gamma+1} \strong K$, and similarly $M_{\gamma+1} \strong M$. Since we have just added an algebraic element, we have $K_{\gamma+1}$ algebraic over its graph of exponentiation.

\medskip
\paragraph{\textbf{Case 2)}} $s_{\gamma+1}$ is transcendental over $K_\gamma$ and $s_{\gamma+1} = \exp_K(a)$ for some $a \in K_\gamma$. Let $K_{\gamma+1} = K_\gamma(\sqrt{s_{\gamma+1}})$, by which we mean adjoin all $m^\mathrm{th}$ roots of $s_{\gamma+1}$ for all $m \in \N^+$,  and let $M_{\gamma+1} = M_\gamma(\sqrt{\exp_M(\theta_\gamma(a))})$. Extend $\theta_\gamma$ by defining $\theta_{\gamma+1}(\exp_K(a/m)) = \exp_M(\theta_\gamma(a)/m)$, and extending to a field isomorphism. This is possible because $s_{\gamma+1}$ is transcendental over $K_\gamma$ and $\exp_M(\theta_\gamma(a))$ is transcendental over $M_\gamma$ (the latter because $M_\gamma \strong M$), and so there is a unique isomorphism type of a coherent system of roots of $s_{\gamma+1}$ over $K_\gamma$, and of $\exp_M(\theta_\gamma(a))$ over $M_\gamma$. Then $\td(K_{\gamma+1}/K_\gamma) = 1$, $a \in D(K_{\gamma+1}) \minus D(K_\gamma)$, and $K_\gamma \strong K$, so $D(K_{\gamma+1})$ is spanned by $D(K_\gamma)$ and $a$. Similarly, $D(M_{\gamma+1})$ is spanned by $\theta_\gamma(a)$ over $D(M_\gamma)$, so $\theta_{\gamma+1}$ is an isomorphism of partial E-fields.

Now if $\xbar$ is any tuple from $K$, we have 
\[\delta(\xbar/D(K_{\gamma+1})) = \delta(\xbar,a/D(K_\gamma)) - \delta(a/D(K_\gamma)) = \delta(\xbar,a/D(K_\gamma)) - 0 \ge 0\]
as $K_\gamma \strong K$, so $K_{\gamma+1} \strong K$. The same argument shows that $M_{\gamma+1} \strong M$. Clearly $K_{\gamma+1}$ is algebraic over its graph of exponentiation.

\medskip
\paragraph{\textbf{Case 3)}} Suppose we are not in case 1) or 2). Then $s_{\gamma+1}$ is transcendental over $K_\gamma$, not of the form $\exp_K(a)$ for any $a \in K_\gamma$, but $\exp_K(s_{\gamma+1}) = b$ for some $b \in K_\gamma$ by the choice of $s_{\gamma+1}$. We may assume that $\sqrt{b} \subs K_\gamma$. If not, apply case 1) $\omega$ times to add the $m^\mathrm{th}$ roots of $b$ for each $m \in \N^+$.

 Let $t \in M$ be such that $\exp_M(t) = \theta_\gamma(b)$. Let $K_{\gamma+1} = K_\gamma(s_{\gamma+1})$ and $M_{\gamma+1} = M_\gamma(t)$. Now for $m\in\N^+$, let 
 \[c_m = \theta_\gamma(\exp_K(s_{\gamma+1}/m)) / \exp_M(t/m).\]
 Then $(c_m)_{m\in\N^+}$ is a coherent system of roots of unity, and $F$ has \vfk\ so, by Lemma~\ref{vfk lemma}, there is $a \in F$ such that $\exp_F(a/m) = c_m$ for each $m\in\N^+$. Extend $\theta_\gamma$ by defining  $\theta_{\gamma+1}(s_{\gamma+1}) = t+a$ and $\theta_{\gamma+1}(\exp_K(s_{\gamma+1}/m)) = \exp_M(\theta_{\gamma+1}(s_{\gamma+1})/m)$ and extending to a field isomorphism. This is possible because $s_{\gamma+1}$ is transcendental over $K_\gamma$ and (since $M_\gamma \strong M$) $t+a$ is transcendental over $M_\gamma$. Then by construction, $\theta_{\gamma+1}$ is an isomorphism of partial E-fields, and as in Case 2 above, we have $K_{\gamma+1} \strong K$, $M_{\gamma+1} \strong M$, and $K_{\gamma+1}$  is algebraic over its graph of exponentiation.

\medskip
\paragraph{\textbf{Conclusion}}
That completes the induction. Let $K_\lambda = \bigcup_{\alpha <\lambda} K_\alpha$. Then $K_\lambda = \gen{F}_K^{ELA}$ because $K_\lambda$ is an ELA-subfield of $K$ containing $F$ and is the smallest such because at each stage we add only elements of $K$ which must lie in every ELA-subfield of $K$ containing $F$. The union of the maps $\theta_\alpha$ gives an embedding of $K_\lambda$ into $M$, and, for the same reason, the image must be $\gen{F}_M^{ELA}$. Hence $\gen{F}_K^{ELA} \iso_F \gen{F}_M^{ELA}$ as required.
\end{proof}

\subsection{Free ELA-fields and semi-strong extensions}

The previous proposition shows that when $F$ is a partial E-subfield of an ELA-field $K$ then the isomorphism type of the ELA-subfield $\gen{F}_K^{ELA}$ it generates is determined by the isomorphism type of $F$, given suitable hypotheses including the kernel not extending. Next we show that even when the kernel does extend, we still have some control.

\begin{prop}\label{ss ELA embedding}
Suppose that $F$ is a partial E-field with full kernel which is algebraic over its graph of exponentiation. Suppose also that $K$ is an ELA-field with \vfk, and $\theta: F \sstrong K$ is a semi-strong embedding. Then $\theta$ extends to a semi-strong embedding $\theta^*: F^{ELA} \sstrong K$. Furthermore, if $\theta(\ker(F)) = \ker(K)$ then $\theta^*(F^{ELA}) = \gen{\theta(F)}^{ELA}_K$.
\end{prop}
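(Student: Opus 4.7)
The plan is to reduce to the kernel-preserving case where Proposition~\ref{vfk prop} applies directly. Working inside $K$ and identifying $F$ with $\theta(F)$, I will first enlarge $F$ to a partial E-subfield $F_1$ of $K$ whose kernel equals all of $\ker(K)$, then take the free ELA-closure of $F_1$ inside $K$, and declare this to be $\theta^*(F^{ELA})$.

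To construct $F_1$: view the field generated by $F \cup \ker(K)$ inside $K$ as a partial E-field extension of $F$ with the same $D$ and $\exp$, so that $\ker(K)$ becomes a subgroup of its additive group. The semi-strong hypothesis gives $\algindep{F}{\ker(K)}{\ker(F)}$, and one checks that $\ker(F) = \ker(K) \cap D(F)$ (directly from $\exp_F = \exp_K|_{D(F)}$) and that $\ker(F)$ is pure in $\ker(K)$ (if $k \in \ker(F)$ and $k = nk'$ with $k' \in \ker(K)$, then $k' = k/n$ lies in $D(F)$ by the $\Q$-vector space structure of $D$, and $\exp_F(k') = \exp_K(k') = 1$, so $k' \in \ker(F)$). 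These are exactly the hypotheses required by the kernel-extension proposition earlier in this section, which therefore produces a unique partial E-field $F_1$ on this underlying field with $D(F_1) = D(F) + \Q\ker(K)$ and $\ker(F_1) = \ker(K)$. By the uniqueness statement this agrees with the partial E-structure that $F_1$ inherits from $K$, so $F_1$ sits inside $K$ as a partial E-subfield with very full kernel.

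Next I verify $F \sstrong F_1 \strong K$. Because $\ker(F_1) = \ker(K)$ and $F_1$ is generated by $F \cup \ker(K)$ as a field, part (c) of the basic predimension lemma yields $\Delta_{F_1}(\xbar/F) = \Delta_K(\xbar/F)$ for $\xbar \in F_1$, and similarly $\Delta_K(\xbar/F_1) = \Delta_K(\xbar/F)$ for $\xbar \in K$ (since the $\Q$-span of $D(F) \cup \ker(K)$ already equals $D(F_1)$, and the field generated by $F \cup \ker(K)$ is $F_1$). So both $F \sstrong F_1$ and $F_1 \sstrong K$ follow from the given $F \sstrong K$, and Lemma~\ref{ss basic props}(f) upgrades the kernel-preserving $F_1 \sstrong K$ to $F_1 \strong K$. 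Proposition~\ref{vfk prop} now applies and yields $\gen{F_1}^{ELA}_K \iso_{F_1} F_1^{ELA}$; set $\theta^*(F^{ELA}) := \gen{F_1}^{ELA}_K$, with $F^{ELA}$ taken to be this chosen free ELA-closure of $F$.

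Finally, tracking $\delta$ through the stepwise construction of the ELA-closure (Cases 1--3 in the proof of Proposition~\ref{vfk prop} each preserve the predimension) shows $\gen{F_1}^{ELA}_K \strong K$, and kernel-preserving strong is a fortiori semi-strong, giving semi-strongness of $\theta^*$. The furthermore clause is immediate: when $\theta(\ker F) = \ker K$ the kernel enlargement step is trivial, so $F_1 = \theta(F)$ and $\theta^*(F^{ELA}) = \gen{\theta(F)}^{ELA}_K$. The main obstacle is the bookkeeping in the first step — carefully verifying the hypotheses of the kernel-extension proposition (notably the purity of $\ker(F)$ in $\ker(K)$) and that the abstractly-built $F_1$ really coincides with the partial E-subfield of $K$ generated by $F \cup \ker(K)$.
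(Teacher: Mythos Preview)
Your reduction has a genuine gap: you never construct a map out of $F^{ELA}$. You build the subfield $\gen{F_1}^{ELA}_K$ inside $K$ and identify it with $F_1^{ELA}$ via Proposition~\ref{vfk prop}, but $F^{ELA}$ is a different object. By definition the free ELA-closure is kernel-preserving, so $\ker(F^{ELA}) = \ker(F)$, whereas $\ker(F_1^{ELA}) = \ker(K)$. The phrase ``with $F^{ELA}$ taken to be this chosen free ELA-closure of $F$'' does not help: $F_1^{ELA}$ is simply not a free ELA-closure of $F$, because it has extra kernel. What you still owe is an embedding $F^{ELA} \into F_1^{ELA}$ over $F$. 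But $F_1^{ELA}$ is an ELA-field with \vfk\ and (by your own checks plus transitivity) $F \sstrong F_1^{ELA}$, so producing that embedding is exactly the statement of the proposition with $K$ replaced by $F_1^{ELA}$. The argument is circular.

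The paper avoids this by working directly: it enumerates $F^{ELA}$ and builds the embedding into $K$ element by element through the three cases (algebraic, exponential, logarithm), carrying along the conditions $F_\alpha \strong F^{ELA}$ and $\theta_\alpha(F_\alpha) \sstrong K$. The \vfk\ hypothesis on $K$ is used only in Case~3 to correct the choice of logarithm by a kernel element so that the coherent system of roots matches. No intermediate $F_1$ is needed. Your preliminary work (checking $\ker(F)$ is pure in $\ker(K)$, that the kernel-extension proposition applies, and that $F \sstrong F_1 \strong K$) is correct and pleasant, but it does not buy you the missing map; you still have to do the inductive construction that the paper does.
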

Note that if $\theta(\ker(F)) \neq \ker(K)$ then the image of $F^{ELA}$ in $K$ will not be uniquely determined by $\theta$. The proof of the Proposition is similar to that of Proposition~\ref{vfk prop}.

\begin{proof}
List $F^{ELA}$ as $(s_{\alpha+1})_{\alpha<\lambda}$ for $\lambda = |F^{ELA}|$, such that for each $\alpha < \lambda$, either
\begin{enumerate}[i)]
 \item $s_{\alpha+1}$ is algebraic over $F \cup \class{s_\beta}{\beta\le \alpha}$; or
 \item $s_{\alpha+1} = \exp_K(a)$ for some $a \in F \cup \class{s_\beta}{\beta\le \alpha}$; or
 \item $\exp_K(s_{\alpha+1}) = b$ for some $b\in F \cup \class{s_\beta}{\beta\le \alpha}$.
\end{enumerate}

We will inductively construct a chain of partial E-subfields \[F= F_0 \subs F_1 \subs F_2 \subs \cdots \subs F_\alpha \subs \cdots\]
 of $F^{ELA}$ and nested semi-strong embeddings $\theta_\alpha: F_\alpha \rInto^{\sstrong} K$ such that for each $\alpha < \lambda$ we have $s_\alpha \in F_\alpha$ and $F_\alpha \strong F^{ELA}$.
 
We start by taking $\theta_0 = \theta$. Now suppose we have $F_\beta$ and $\theta_\beta$ for all $\beta <\alpha$. If $\alpha$ is a limit, take unions. By Lemma~\ref{ss basic props}(\ref{ss union lemma}), $F_\alpha \strong F^{ELA}$ and $\theta_\alpha(F_\alpha) \sstrong K$. Now consider $\alpha = \gamma +1$.

\medskip
\paragraph{\textbf{Case 1)}} $s_{\gamma+1}$ is algebraic over $F_\gamma$ (including the case where $s_{\gamma+1} \in F_\gamma$). Let $p(X)$ be the minimal polynomial of $s_{\gamma+1}$ over $F_\gamma$. The image $p^\theta$ of $p$ is an irreducible polynomial over $\theta_\gamma(F_\gamma)$, so let $t$ be any root of $p^\theta$ in $K$. Let $F_{\gamma+1} = F_\gamma(s_{\gamma+1})$ and let $\theta_{\gamma+1}$ be the unique field embedding which extends $\theta_\gamma$ and sends $s_{\gamma+1}$ to $t$. We make $F_{\gamma+1}$ into a partial exponential field by taking the graph of exponentiation to be the graph of $\exp_{F^{ELA}}$ intersected with $F_{\gamma+1}^2$. 

Using the fact that $F_\gamma \strong F^{ELA}$, it is easy to see that $D(F_{\gamma+1}) = D(F_\gamma)$. Similarly, since $\theta_\alpha(F_\alpha) \sstrong K$, the graph of $\exp_K$ intersected with $\theta_{\gamma+1}(F_{\gamma+1})^2$ is the same as its intersection with $\theta_{\gamma}(F_{\gamma})^2$, so $\theta_{\gamma+1}$ is an embedding of partial E-fields. By the same argument, $F_{\gamma+1} \strong F^{ELA}$ and $\theta_{\gamma+1}(F_{\gamma+1}) \sstrong K$.

\medskip
\paragraph{\textbf{Case 2)}} $s_{\gamma+1}$ is transcendental over $F_\gamma$ and $s_{\gamma+1} = \exp_F(a)$ for some $a \in F_\gamma$. Let $F_{\gamma+1} = F_\gamma(\sqrt{s_{\gamma+1}})$. 

Since $F_\gamma \strong F^{ELA}$, we see that $s_{\gamma+1}$ is transcendental over $F_\gamma$. Since $\theta_\gamma(F_\gamma) \sstrong K$, we see that $\exp_K(\theta_\gamma(a))$ is transcendental over $\theta_\gamma(F_\gamma) \cup \ker(K)$. Thus there is a unique isomorphism type of a coherent system of roots of $s_{\gamma+1}$ over $F_\gamma$, and of $\exp_K(\theta_\gamma(a))$ over $\theta_\gamma(F_\gamma) \cup \ker(K)$, so we may define $\theta_{\gamma+1}(\exp(a/m)) = \exp_K(\theta_\gamma(a)/m)$, and extend uniquely to a field embedding $\ra{F_{\gamma+1}}{\theta_{\gamma+1}}{K}$.

Then $\td(F_{\gamma+1}/F_\gamma) = 1$, $a \in D(F_{\gamma+1}) \minus D(F_\gamma)$, and $F_\gamma \strong F^{ELA}$, so $D(F_{\gamma+1})$ is spanned by $D(F_\gamma) \cup \{a\}$. Similarly, $D(\theta_{\gamma+1}(F_{\gamma+1}))$ is spanned by $D(\theta_{\gamma+1}(F_{\gamma+1})) \cup \{\theta_\gamma(a)\}$, so $\theta_{\gamma+1}$ is an embedding of partial E-fields.

Now if $\xbar$ is any tuple from $F^{ELA}$, we have 
\[\delta(\xbar/F_{\gamma+1}) = \delta(\xbar,a/F_\gamma) - \delta(a/F_\gamma) = \delta(\xbar,a/F_\gamma) - 0 \ge 0\]
because $F_\gamma \strong F^{ELA}$, and so $F_{\gamma+1} \strong F^{ELA}$. A similar argument replacing $\delta$ by $\Delta_K$ shows that $\theta_{\gamma+1}(F_{\gamma+1}) \sstrong K$.

\medskip
\paragraph{\textbf{Case 3)}} Suppose we are not in case 1) or 2). Then $s_{\gamma+1}$ is transcendental over $F_\gamma$ and not of the form $\exp_{F^{ELA}}(a)$ for any $a \in F_\gamma$, but $\exp_{F^{ELA}}(s_{\gamma+1}) = b$ for some $b \in F_\gamma$. As in the proof of Proposition~\ref{vfk prop}, we may assume $\sqrt b \subs F_\gamma$ by applying case 1) $\omega$ times if necessary. Let $F_{\gamma+1} = F_\gamma(s_{\gamma+1})$.

Consider the sequence $(a_m)_{m\in\N^+} = (\exp_{F^{ELA}}(s_{\gamma+1}/m))_{m \in \N^+}$ of roots of $b$. Choose a coherent sequence $(c_m)_{m\in\N^+}$ of roots of $\theta_\gamma(b)$ in $K$ such that $c_m = \theta_\gamma(a_m)$ for any $m$ with $a_m \in F_\gamma$.

Since $K$ has \vfk, there is $t \in K$ such that $\exp_K(t/m) = c_m$, for each $m \in \N^+$. As $\theta_\gamma(F_\gamma) \sstrong K$, $t$ is transcendental over $\theta_\gamma(F_\gamma)\cup\ker(K)$. Thus we can extend $\theta_\gamma$ by defining  $\theta_{\gamma+1}(s_{\gamma+1}) = t$ and $\theta_{\gamma+1}(a_m) = c_m$ for each $m \in \N^+$ and extending to a field embedding. As in case 2), $\theta_{\gamma+1}$ is an embedding of partial E-fields, $F_{\gamma+1} \strong F^{ELA}$, and $\theta_{\gamma+1}(F_{\gamma+1}) \sstrong K$.

\medskip
\paragraph{\textbf{Conclusion}}
That completes the induction. Let $\theta^* = \bigcup_{\alpha < \lambda} \theta_\alpha$. By Lemma~\ref{ss basic props}(\ref{ss union lemma}), $\theta^*$ is a semi-strong embedding of $F^{ELA}$ into $K$. 

Now suppose $\theta(\ker(F)) = \ker(K)$. Then for each $b \in \theta^*(F^{ELA})$ and each $a \in K$ such that $\exp_K(a) = b$, we have $a \in \theta^*(F^{ELA})$. So in this case  $\theta^*(F^{ELA}) = \gen{\theta(F)}^{ELA}_K$ as required.

\end{proof}

\subsection{Exponential algebraicity and exponential transcendence}\label{exp trans section}

In any (partial) exponential field $F$ there is a pregeometry called exponential algebraic closure, which we write $\ecl^F$. We give a quick account of its definition. Details can be found in \cite{EAEF}. An exponential polynomial (without iterations of exponentiation) is a function of the form $f(\Xbar) = p(\Xbar,e^\Xbar)$ where $p \in F[X_1,\ldots,X_n,Y_1,\ldots,Y_n]$ is a polynomial. We can extend the formal differentiation of polynomials to exponential polynomials in a unique way such that $\frac{\partial e^X}{\partial X} = e^X$.

  A \emph{Khovanskii system} (of equations and
  inequations) consists of, for some $n \in \N$, exponential
  polynomials $f_1,\ldots,f_n$ with equations
  \[f_i(x_1,\ldots,x_n) = 0 \quad \mbox{for } i=1,\ldots,n\] and the
  inequation
\[\begin{vmatrix}
  \frac{\partial f_1}{\partial X_1} & \cdots &\frac{\partial
    f_1}{\partial X_n}\\
  \vdots & \ddots & \vdots \\
  \frac{\partial f_n}{\partial X_1} & \cdots &\frac{\partial
    f_n}{\partial X_n} \end{vmatrix} (x_1,\ldots,x_n) \neq 0.\]
where the differentiation here is the formal differentiation of exponential polynomials. We say $n$ is the \emph{width} of the Khovanskii system.

  For any subset $C$ of $F$, we define $a \in \ecl^F(C)$ if and only if
  there are $n \in \N$, $a_1,\ldots,a_n \in F$, and exponential polynomials $f_1,\ldots,f_n$
  with coefficients from $\Q(C)$ such that $a = a_1$ and
  $(a_1,\ldots,a_n)$ is a solution to the Khovanskii system given by
  the $f_i$.

  We say that $\ecl^F(C)$ is the \emph{exponential algebraic closure} of $C$ in $F$. If $a \in \ecl^F(C)$ we say that $a$ is \emph{exponentially algebraic} over $C$ in $F$, and otherwise that it is \emph{exponentially transcendental} over $C$ in $F$.

\begin{fact}[{\cite[Theorem~1.1]{EAEF}}]
Exponential algebraic closure $\ecl^F$ is a pregeometry in any partial exponential field $F$.
\end{fact}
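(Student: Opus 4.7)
The plan is to verify the five axioms of a pregeometry for $\ecl^F$ in turn: reflexivity, monotonicity, finite character, transitivity, and exchange.

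The first three are essentially bookkeeping from the definition via Khovanskii systems. For reflexivity, given $a \in C$, the width-one system $f_1(X_1) = X_1 - a$ has Jacobian $1 \ne 0$ and solution $a_1 = a$. Monotonicity is immediate: any Khovanskii system with coefficients from $\Q(C)$ is also one with coefficients from $\Q(D)$ when $C \subs D$. Finite character follows because each Khovanskii system uses only finitely many coefficients, which lie in $\Q(C_0)$ for some finite $C_0 \subs C$.

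For transitivity I would argue by concatenation. Suppose $a \in \ecl^F(D)$ for some finite $D \subs \ecl^F(C)$, and, by finite character, fix for each $d \in D$ a Khovanskii system over $C$ witnessing $d \in \ecl^F(C)$. Take the disjoint union of these in distinct variable sets together with the width-$n$ system witnessing $a \in \ecl^F(D)$, regarded (after substituting polynomial expressions for elements of $D$ in terms of the chosen solution tuples) as a system over $C$. The combined square system's Jacobian matrix is block lower-triangular, with the diagonal blocks being the original individual Jacobians, each nonzero by hypothesis; hence the full determinant is nonzero and $a \in \ecl^F(C)$.

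The main obstacle is exchange. Suppose $a \in \ecl^F(Cb) \minus \ecl^F(C)$, witnessed by a Khovanskii system $f_1, \ldots, f_n$ in variables $X_1, \ldots, X_n$ over $\Q(Cb)$, with solution $(a_1, \ldots, a_n)$ where $a_1 = a$. Viewing $b$ as a new variable $Y$, the $f_i$ become exponential polynomials in $(X_1, \ldots, X_n, Y)$ with coefficients in $\Q(C)$, and $(a_1, \ldots, a_n, b)$ is a common zero with a nonzero $n \times n$ minor of the Jacobian in the $X_j$-columns. The plan is to produce a width-$n$ Khovanskii system over $\Q(Ca)$ with solution $(a_2, \ldots, a_n, b)$, thereby placing $b \in \ecl^F(Ca)$. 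Substituting $X_1 = a$ reduces us to $n$ equations in the $n$ variables $(X_2, \ldots, X_n, Y)$ over $\Q(Ca)$; its Jacobian at $(a_2, \ldots, a_n, b)$ is the $n \times n$ minor of the $n \times (n+1)$ Jacobian in $(X_1, \ldots, X_n, Y)$ obtained by deleting the $X_1$-column. The crux is to show this particular minor is nonzero. If every minor involving the $Y$-column vanished, the rank of the full $n \times (n+1)$ matrix would be witnessed by a submatrix that avoids $Y$, and one could then repackage the $f_i$, possibly combined with an auxiliary equation, into a Khovanskii system over $\Q(C)$ whose solution contains $a$ — contradicting $a \notin \ecl^F(C)$. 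This rank-shifting step, turning on the interplay between the Jacobian non-degeneracy condition and the exponential-algebraic status of $a$ over $C$, is the technical heart of the statement and is essentially the content of Theorem~1.1 of \cite{EAEF}.
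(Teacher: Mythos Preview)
The paper does not prove this statement; it is quoted as a Fact with a bare citation to \cite{EAEF}, so there is no in-paper argument to compare against.

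Your handling of reflexivity, monotonicity, and finite character is fine, and the concatenation argument for transitivity is essentially correct (with the small caveat that coefficients lie in $\Q(D)$, hence are rational functions in the $d_j$; one must clear denominators before forming the block-triangular Jacobian, which is harmless since the denominators are nonzero at the solution and constant in the $X$-variables).

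The exchange argument, however, has a genuine gap. You need the \emph{specific} minor obtained by deleting the $X_1$-column to be nonzero in order to place $b \in \ecl^F(Ca)$, but your case split only addresses the situation where \emph{every} minor containing the $Y$-column vanishes. If the $X_1$-minor vanishes while some other $X_i$-minor does not, your argument yields nothing about $b$ over $Ca$. More seriously, the ``repackaging'' step --- manufacturing a width-$n$ Khovanskii system over $\Q(C)$ with $a$ as a coordinate from $n$ equations in $n+1$ variables --- is precisely the difficulty, and you do not carry it out; you explicitly concede that this step ``is essentially the content of Theorem~1.1 of \cite{EAEF}.'' So in the end you, like the paper, defer the substantive content to the cited reference. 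For orientation: the proof in \cite{EAEF} does not proceed by direct manipulation of Khovanskii-system Jacobians but through a characterisation of $\ecl^F$ via exponential derivations, after which the pregeometry axioms, exchange included, reduce to linear algebra.
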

The dimension notion from the pregeometry $\ecl^F$ is called \emph{exponential transcendence degree} and is denoted by $\etd^F$, or just $\etd$.

The above definition of $\ecl^F$ makes sense in any partial exponential field. A different definition is used in \cite{Zilber05peACF0}, which makes sense only in partial exponential fields with the Schanuel property, that is, where axiom 2 holds. However, the two definitions agree in that case by \cite[Theorem~1.3]{EAEF}, which also tells us the following.
\begin{fact}
Let $F \in \ECF$, $A \sstrong F$ and $\bbar$ be a finite tuple  from $F$. Then
\[\etd^F(\bbar/A) = \min \class{\Delta_F(\bbar \cup \cbar / A)}{\cbar \mbox{ is a finite tuple from } F}\]
\end{fact}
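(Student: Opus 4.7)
The plan is to deduce this from \cite[Theorem~1.3]{EAEF}, the analogous identity for the absolute Schanuel predimension $\delta$ in partial E-fields satisfying Schanuel. The proof there uses the predimension only formally, through three properties: the addition formula, non-negativity above the base, and compatibility with the Khovanskii-system definition of $\ecl^F$. For $\Delta_F(\cdot/A)$ in our setting, additivity is part of the basic-properties lemma, non-negativity on finite tuples is precisely the hypothesis $A \sstrong F$, and Khovanskii compatibility is independent of which predimension is used. So the main job is to verify that the [EAEF] machinery relativizes cleanly.

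For the inequality $\etd^F(\bbar/A) \leq \Delta_F(\bbar \cup \cbar/A)$ for all finite $\cbar$, by monotonicity of dimension in a pregeometry it suffices to prove $\etd^F(\xbar/A) \leq \Delta_F(\xbar/A)$. I would introduce the auxiliary closure operator $a \in \cl_A(C) \iff \inf_{\dbar} \Delta_F(a\dbar/A \cup C) = 0$. Additivity of $\Delta_F$ together with $A \sstrong F$ make $\cl_A$ a pregeometry whose dimension is bounded above by $\Delta_F(\cdot/A)$ on finite tuples. It remains to identify $\cl_A$ with $\ecl^F$ above $A$: a Khovanskii system of width $n$ places the witness tuple on an $n$-dimensional subvariety of $G^n$ cut out by $n$ polynomials with non-vanishing Jacobian, forcing $\Delta_F \leq 0$ on the witnesses and hence $= 0$ by $A \sstrong F$, so $\ecl^F \subseteq \cl_A$; the reverse inclusion is the geometric heart of \cite[Theorem~1.3]{EAEF}, manufacturing a Khovanskii system from a predimension-zero tuple via a generic smooth point argument.

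For the attainment of the minimum, choose a maximal ecl-independent subtuple $\bbar' \subseteq \bbar$ over $A$ with $|\bbar'| = d := \etd^F(\bbar/A)$. Since each $b \in \bbar \setminus \bbar'$ lies in $\ecl^F(A \bbar') = \cl_A(A\bbar')$, collecting Khovanskii witnesses produces a finite tuple $\cbar$ with $\Delta_F((\bbar \setminus \bbar')\cup \cbar / A \bbar') = 0$, whence by additivity $\Delta_F(\bbar \cup \cbar / A) = \Delta_F(\bbar'/A)$. Splitting $\bbar'$ into its maximal subtuple $\bbar'_1$ that is $\Q$-linearly independent modulo $D(A) \cup \ker(F)$, of some size $k \leq d$, and its complement in $D(A) + \ker(F)$ (which contributes nothing to either term in the defining formula for $\Delta_F$, as those elements are kernel-translates of $D(A)$ and their exponentials lie in $I(A)$), a direct calculation gives $\Delta_F(\bbar'/A) \leq k \leq d$; combined with the previously established upper bound applied to $\bbar'$, we get $\Delta_F(\bbar'/A) = d$, as required. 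The chief technical obstacle is the inclusion $\cl_A \subseteq \ecl^F$ — the construction of a Khovanskii system from a predimension-zero tuple — which is the geometric substance of \cite[Theorem~1.3]{EAEF} and is imported wholesale, with the only novelty being that $A \sstrong F$ plays the role of the absolute Schanuel hypothesis there.
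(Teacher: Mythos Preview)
Your proposal is correct and matches the paper's treatment: the paper does not give an independent proof but simply records this as a \emph{Fact} deduced from \cite[Theorem~1.3]{EAEF}, and your write-up is an unpacking of exactly that deduction, verifying that the argument there relativizes from the absolute predimension $\delta$ to $\Delta_F(\cdot/A)$ once $A \sstrong F$ supplies the non-negativity input. Your extra detail (the auxiliary closure $\cl_A$, the splitting of $\bbar'$) is sound and not in the paper, but it is the natural elaboration of what ``which also tells us the following'' is gesturing at; note that your observation $k=d$ at the end is automatic, since an $\ecl$-independent tuple over $A$ is already $\Q$-linearly independent over $D(A)\cup\ker(F)$ (kernel elements lie in $\ecl^F(\emptyset)$), so the splitting step is in fact unnecessary.
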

\begin{proof}
If $A \sstrong F$ then $A \cup \ker(F) \strong F$. Since $\ker(F) \subs \ecl^F(\emptyset)$ we have $\etd^F(\bbar/A) = \etd^F(\bbar/A \cup \ker(F))$. Now \cite[Theorem~1.3]{EAEF} says that 
\[\etd^F(\bbar/A \cup \ker(F)) = \min \class{\delta(\bbar \cup \cbar / A \cup \ker(F))}{\cbar \mbox{ is a finite tuple from } F}\]
but $\delta(\bbar \cup \cbar / A \cup \ker(F)) = \Delta_F(\bbar \cup \cbar / A)$ so we are done.
\end{proof}

\subsection{Strong kernel-preserving extensions}\label{classification of strong extensions}

Our study of the class \ECF\ of exponential fields follows the common practice in model theory of understanding the types of finite tuples. The general pattern is to take a suitably saturated model $M$ and two finite $n$-tuples $\abar$ and $\bbar$ from $M$, and to see what conditions we need to establish a back-and-forth system, or even an automorphism of $M$, which takes $\abar$ to $\bbar$.

Clearly a bare minimum would be that $\abar$ and $\bbar$ generate isomorphic partial E-subfields of $M$. Since the predimension $\Delta_M$ is bounded below by 0, we can extend $\abar$ and $\bbar$ to finite tuples $\abar'$ and $\bbar'$ which generate semistrong partial E-subfields of $M$. It turns out that if we choose $\abar'$ and $\bbar'$ such that their $\Q$-linear dimension over the kernel is minimal then they are unique up to choosing a different generating set for the $\Q$-linear subspace they span over the kernel. So we can reduce to the case that $\abar$ and $\bbar$ are semistrong in $M$. Propositions~\ref{vfk prop} and~\ref{ss ELA embedding} then show that the ELA-subfields of $M$ generated by $\ker(M) \cup \abar$ and $\ker(M) \cup \bbar$ are isomorphic. It remains to show that such an isomorphism can be extended to an automorphism of $M$.

In order to do that, we need to understand the finitely generated extensions of ELA-subfields inside $M$.  The finitely generated, strong, kernel-preserving extensions of ELA-fields are explained and classified in sections 3 and 4 of \cite{FPEF}. We explain the necessary results from there. 

Suppose $F \strong K$ is a kernel-preserving strong extension of ELA-fields with \vfk, and suppose $K$ is generated by a finite tuple $\abar$ over $F$, in the sense that the smallest ELA-subfield of $K$ which contains $F \cup \abar$ is $K$ itself. Let $V$ be the algebraic locus $V = \loc(\abar,e^\abar/F)$. We may assume that $\abar$ is $\Q$-linearly independent over $F$, since taking a maximal linearly independent subset of $\abar$ gives the same extension $K$. In this case, we say that the variety $V$ is  \emph{additively free}. Since the extension is kernel-preserving, it follows that no product of integer powers of the $e^{a_i}$ lies in $F$. In this case, we say that $V$ is \emph{multiplicatively free}.

As before, we write $G$ for $\ga \cross \gm$. Each matrix $M \in \Mat_{n \cross n}(\Z)$ defines a homomorphism
$\ra{G^n}{M}{G^n}$ by acting as a linear map on $\ga^n$ and as a
multiplicative map on $\gm^n$. For any subvariety $W \subs G^n$, we write $M\cdot W$
for its image.  An irreducible subvariety $W$ of $G^n$ is said to be \emph{rotund} (in \cite{Zilber05peACF0} the terminology was \emph{ex.\ normal}) if and only if for every matrix $M \in \Mat_{n \cross n}(\Z)$ we have $\dim M\cdot W \ge \rk M$. The extension $F \strong K$ is strong if and only if $V$ is rotund \cite[Proposition~5.2]{FPEF}.

To specify the type of $\abar$ over $F$ we must also specify the locus of $(\abar/r, e^{\abar/r})$ over $F$, for all integers $r$. In general these loci may not be determined by $V$. Equivalently, the variety $=(rI\cdot)^{-1}V$ may not be irreducible, where $rI$ is the scalar matrix. However, a result of the second author known as the \emph{thumbtack lemma} (see for example \cite[Fact~3.7]{FPEF}) shows that there is some integer $m$ such that, if we replace each generator $a_i$ by $a_i/m$, then all such varieties are irreducible. We say $V$ is \emph{Kummer-generic}.

We have reduced to the situation where the ELA-field extension $K$ of $F$ is generated by a finite tuple $\abar$ and such that $V = \loc(\abar,e^\abar/F)$ is additively and multiplicatively free, rotund and Kummer-generic. In section~3 of \cite{FPEF} it is shown that if $F$ is countable and has standard kernel then $K$ is determined up to isomorphism as an extension of $F$ by $V$. The same holds in our situation.

\begin{prop}
Suppose that $F$ is an ELA-field with \vfk, and that $K$ is a kernel-preserving strong extension which is generated by a finite tuple $\abar$ and such that $V = \loc(\abar,e^\abar/F)$ is additively and multiplicatively free, rotund and Kummer-generic. Then $K$ is determined up to isomorphism as an extension of $F$ by $V$. 
\end{prop}
\begin{proof}
The proof is the same as in \cite[Section~3]{FPEF} except that Proposition~\ref{vfk prop} is used in place of Theorem~2.18 of \cite{FPEF}.
\end{proof}
In this case we write the extension $K$ as $F|V$ (spoken as \emph{$F$ extended by $V$}).

If we have $V \subs G^n$ satisfying all the above conditions and furthermore $V$ has dimension $n$, then the extension $K = F|V$ of $F$ is \emph{exponentially algebraic} in the sense that $\ecl^K(F) = K$. The simplest example of a non-exponentially algebraic extension is to adjoin a single exponentially transcendental element, which corresponds to the case where $n=1$ and $V = G$, so $\dim V = 2$. We can now see that an exponential field $F$ satisfies axiom 4, strong exponential-algebraic closedness, if and only if given any finite tuple $\abar$ from $F$, and any finitely generated strong, kernel-preserving, exponentially algebraic ELA-extension $\gen{\abar}^{ELA}_F \strong K$, there is an embedding of $K$ into $F$ over $\gen{\abar}^{ELA}_F$. In other words, it is existential closedness within the class of strong kernel-preserving exponentially algebraic extensions.

The last thing we need from this recap of exponential algebra is to consider how to amalgamate two extensions. In fact this is easy as they have a unique free amalgam.
\begin{lemma}\label{amalgam}
Let $F$ be an ELA-field with \vfk\ and let $V \subs G^n$, $W \subs G^r$ be two additively and multiplicatively free, irreducible, rotund, Kummer-generic subvarieties, defined over $F$. Then 
  \[(F|V)|W \iso (F|W)|V \iso F|(V\cross W)\]
   as extensions of $F$.
\end{lemma}
\begin{proof}
Exactly the same as Lemma~5.9 of \cite{FPEF}, again using Proposition~\ref{vfk prop} in place of Theorem~2.18 of that paper.
\end{proof}

\section{The class \ECF}\label{algebra section}

Recall from the introduction that \ECF (Exponentially Closed Fields with Strong Kernel) is the class of exponential fields $F$ satisfying the following axioms: 
\begin{description}
 \item[1] $F$ is an ELA-field.
\item[2$'$a] The kernel is a cyclic $Z$-module.
 \item[2$'$b] Every element of the kernel is transcendental over $Z$.
\item[2$'$c] $\tuple{Z;+,\cdot} \models \Th\tuple{\Z;+,\cdot}$
 \item[3$'$] The \SPOK\ (SPOK)
\item[4] Strong exponential-algebraic closedness (SEAC)
\end{description}
We have seen unconditionally that these axioms are \Loo-expressible, so unconditionally \ECF\ is an \Loo-class. Later we will see conditionally on CIT that \ECF\ is an elementary class. In this section we use the exponential algebra we have developed to study the class \ECF, without assuming CIT. We describe the appropriate notion of saturated model (saturated over the kernel), show that such models exist in all cardinalities beyond the continuum, and show that they are unique and homogeneous over their kernels.

\subsection{Saturation over the kernel}

We give a stronger version of SEAC, incorporating some saturation.
\begin{defn}
Let $\lambda$ be an infinite cardinal. An ELA-field $F$ is said to satisfy $\lambda$-SEAC if and only if, whenever $V$ is a rotund, additively and multiplicatively free subvariety of $G^n$ defined over $F$ and of dimension $n$, and $A$ is a subset of $F$ with $|A| < \lambda$, then there is $\xbar$ in $F$ such that $(\xbar,e^\xbar) \in V$ and is generic in $V$ over $A$.
\end{defn}

Note that $\aleph_0$-SEAC is the same as SEAC, and that $\aleph_1$-SEAC is incompatible with the Countable Closure Property. 

\begin{defn}\label{sat over kernel defn}
An ELA-field $F$ is \emph{$\lambda$-saturated over its kernel} if and only if $\etd(F) \ge \lambda$ and $F \models \lambda$-SEAC. $F$ is \emph{saturated over its kernel} if and only if it is $|F|$-saturated over its kernel.
\end{defn}

\begin{prop}\label{sat over kernel}
Let $F \in \ECF$ have \vfk. Then there is a kernel-preserving strong extension $F \strong F'$ with $F' \in \ECF$, $|F'| = |F|$, and such that $F'$ is saturated over its kernel.
\end{prop}
\begin{proof}
From Proposition~\ref{classification of strong extensions}, there are only $|F|$-many finitely generated kernel-preserving, strong ELA-extensions of $F$. The extensions amalgamate freely by Lemma~\ref{amalgam}, so a standard construction gives $F'$ as the union of a chain of length $|F| \cross \omega$.
\end{proof}

\subsection{Uniqueness and homogeneity}

\begin{theorem}\label{saturation over the kernel}
Suppose $F, M \in \ECF$, both with \vfk, and suppose $|F| = |M| = \lambda > 2^{\aleph_0}$ and both $F$ and $M$ satisfy $\lambda$-SEAC. Suppose that we have an isomorphism $\theta_Z: Z(F) \rIso Z(M)$, and that $\etd(F) = \etd(M)$, and we have a bijection $\theta_B: B_F \rIso B_M$ between exponential transcendence bases of $F$ and $M$. Suppose furthermore that $F_{00} \sstrong F$ and $M_{00} \sstrong M$ are semistrong partial E-subfields or the empty set, with $|F_{00}| = |M_{00}| < \lambda$ and that $\theta_{00}: F_{00} \rIso M_{00}$ is an isomorphism which is compatible with $\theta_Z$ and with $\theta_B$ (that is, the functions agree where more than one is defined).

Then there is an isomorphism $\theta: F \rIso M$ extending $\theta_Z \cup \theta_B \cup \theta_{00}$.
\end{theorem}

\begin{proof}
List $F$ as $(a_{\mu+1})_{\mu < \lambda}$ and list $M$ as $(b_{\mu+1})_{\mu < \lambda}$. We construct chains $(F_\mu)_{\mu < \lambda}$ and $(M_\mu)_{\mu < \lambda}$ of ELA-subfields of $F$ and $M$ respectively and isomorphisms $\theta_{\mu}: F_\mu \rIso M_\mu$ such that
\begin{enumerate}[(i)]
\item $F_\mu \sstrong F$ and $M_\mu \sstrong M$;
\item $Z(F_\mu) \elsubs Z(F)$ and $Z(M_\mu) \elsubs Z(M)$;
\item $|F_\mu| < \lambda$;
\item $\theta_\mu$ is compatible with $\theta_Z \cup \theta_B$; and
\item If $\mu$ is a successor ordinal, then $a_\mu \in F_\mu$ and $b_\mu \in M_\mu$.
\end{enumerate}

\paragraph{\textbf{Base step}}

Let $\tau$ be transcendental and let $SK$ (for standard kernel) be the partial E-field with $D(SK) = \Q \tau$, $\exp(\tau/n)$ a primitive $n^\mathrm{th}$ root of unity for each $n \in \N^+$, and $SK$ generated as a field by $\tau$ and the roots of unity. This defines $SK$ uniquely up to isomorphism, and it is easy to see that $SK$ embeds semi-strongly in all members of \ECF, uniquely up to choice of $\pm \tau$. So if $F_{00} = \emptyset$ we may redefine $F_{00} = M_{00} = SK$ and choose an isomorphism $\theta_{00}$. So we reduce to the case where $F_{00} \neq \emptyset$.

We consider $Z(F)$ as a model of $\Th(\Z;+,\cdot)$. By the downwards L\"owenheim-Skolem theorem, we can find $Z(F_0) \elsubs Z(F)$ with $Z(F_{00}) \subs Z(F_0)$ and $|Z(F_0)| < \lambda$. Since $F$ has very full kernel, we can also assume that $Z(F_0)$ is algebraically compact. Take $Z(M_0) = \theta_Z(Z(F_0))$. By Corollary~\ref{Fker cor}, the partial E-subfields $F_0'$ of $F$ and $M_0'$ of $M$ generated by $F_{00} \cup Z(F_0)$ and $M_{00} \cup Z(M_0)$ respectively are isomorphic, and there is an isomorphism $\theta_0'$ extending $\theta_{00}$ and compatible with $\theta_Z$.

By Proposition~\ref{ss ELA embedding}, the isomorphism $\theta_0'$ extends to an isomorphism $\theta_0: F_0 \rIso M_0$ for some $F_0 \sstrong F$, with $F_0 \iso (F_0')^{ELA}$ and $M_0 \sstrong M$, with $M_0 \iso (M_0')^{ELA}$. The relevant clauses from (i)---(v) hold immediately.

\medskip
\paragraph{\textbf{Limit steps}}
If $\mu$ is a limit ordinal, let $F_\mu = \bigcup_{\nu<\mu} F_\nu$,  $M_\mu = \bigcup_{\nu<\mu} M_\nu$, and  $\theta_\mu = \bigcup_{\nu<\mu} \theta_\nu$. The clauses (i)---(v) are preserved.

\medskip
\paragraph{\textbf{Successor steps}}

Suppose $\mu = \nu + 1$. Since $F_\nu \sstrong F$ and $B_F$ is an exponential transcendence base of $F$, there are finite tuples $\abar\subs F$ containing $a_\mu$, $\bbar \subs B_F \minus F_\nu$ and $\cbar \subs \ker(F)$ such that 
\[\td(\abar,e^\abar/F_\nu,\cbar,\bbar,e^\bbar) - \mrk(e^\abar/F_\nu,e^\bbar) = 0.\]
Choose $Z(F_\mu) \elsubs Z(F)$ with $Z(F_\nu) \cup \cbar \subs Z(F_\mu)$, and $|Z(F_\mu)| < \lambda$. Let $Z(M_\mu) = \theta_Z(Z(F_\mu))$.

As in the base step, using Corollary~\ref{Fker cor} and Proposition~\ref{ss ELA embedding}, we can extend $\theta_\nu$ to an isomorphism of ELA-fields $\theta_\mu':F_\mu' \rIso M_\mu'$, with $Z(F_\mu') = Z(F_\mu)$ and $Z(M_\mu') = Z(M_\mu)$.
Let $r = |\bbar|$. The isomorphism type of the partial E-field extension generated by an exponentially transcendental $r$-tuple is uniquely defined, so $\theta_\mu'$ extends to an isomorphism $\gen{F_\mu' \cup \bbar}_F \iso \gen{M_\mu' \cup \theta_B(\bbar)}_M$, compatible with $\theta_B$. Hence by Proposition~\ref{ss ELA embedding} again, we may extend $\theta_\mu'$ to an isomorphism 
\[\theta_\mu'' : F_\mu'' \rIso M_\mu''\]
where $(\gen{F_\mu' \cup \bbar}_F)^{ELA} \iso F_\mu'' \sstrong F$ and $(\gen{M_\mu' \cup \theta_B(\bbar)}_M)^{ELA} \iso M_\mu'' \sstrong M$.

Now we may take $\abar$ to be a finite tuple of minimal size $n$ such that $a_\mu \in \gen{F_\mu'' \cup \abar}_F$ and $\gen{F_\mu'' \cup \abar}_F \sstrong F$. Since $F_\mu''$ is an ELA-subfield of $F$ and $\abar$ is algebraically free from the kernel of $F$ over the kernel of $F_\mu''$, $V \leteq \loc(\abar,e^\abar/F_\mu'')$ is additively and multiplicatively free. Since $F_\mu'' \sstrong F$, we therefore have that $V$ is rotund, and since $\etd(\abar / F_\mu'') = 0$, we have that $\dim V = n$. As a field $F_\mu''$ is algebraically closed, so, as discussed in section~\ref{classification of strong extensions}, replacing $\abar$ by $\abar / m$ for some $m\in \N^+$ if necessary, the partial E-field extension $F_\mu'' \subs \gen{F_\mu'' \cup \abar}_F$ is determined up to isomorphism over $F_\mu''$ by $V$. Now $M \models \lambda$-SEAC, so there is $\dbar \in M$ such that $(\dbar,e^\dbar) \in V^\theta$, generic over $M_\mu''$, where $V^\theta$ is the subvariety of $G^n(M)$ corresponding to $V$ under $\theta_\mu''$.

Then $\theta_\mu''$ extends to an isomorphism $\gen{F_\mu'' \cup \abar}_F \rIso \gen{M_\mu'' \cup \dbar}_M$, and, by Proposition~\ref{ss ELA embedding} again, to $\theta_\mu''' : F_\mu''' \rIso M_\mu'''$ where
\[(\gen{F_\mu'' \cup \abar}_F)^{ELA} \iso F_\mu''' \sstrong F \quad \mbox{ and }\quad  (\gen{M_\mu'' \cup \dbar)}_M)^{ELA} \iso M_\mu''' \sstrong M.\]

Now $F_\mu'''$ and $M_\mu'''$ satisfy (i)---(iv) and $a_\mu \in F_\mu'''$. Repeat the process swapping the roles of $F$ and $M$ and starting with $F_\mu'''$ in place of $M_\nu$ and $M_\mu'''$ in place of $F_\nu$ to get $\theta_\mu: F_\mu \rIso M_\mu$, satisfying (i)---(v).

\medskip
\paragraph{\textbf{Conclusion}}
That completes the induction steps. Now $\theta \leteq  \bigcup_{\mu<\lambda} \theta_\mu$ is an isomorphism $\theta: F \rIso M$, extending $\theta_Z \cup \theta_B \cup \theta_{00}$ as required.
\end{proof}

We believe that, by a more careful back-and-forth analysis, one could remove the condition that $\lambda > 2^{\aleph_0}$. (From very full kernel we know that $\lambda \ge 2^{\aleph_0}$.) However, we do not need the stronger statement that would result.

\subsection{Superstability over the kernel}

Let $R \models \Th(\Z;+,\cdot)$ be algebraically compact, and consider the subclass 
\[\ECF_R = \class{F \in \ECF}{Z(F) = R}.\]

Then, by Proposition~\ref{sat over kernel}, for each cardinal $\lambda \ge |R|$, there is a model in $\ECF_R$  of cardinality $\lambda$ that is saturated over its kernel. Theorem~\ref{saturation over the kernel} tells us that saturation over the kernel is the same thing as saturation within the class $\ECF_R$, and furthermore that these saturated models are unique up to isomorphism and are homogeneous models (at least after adding parameters for all elements of $R$). Thus the class $\ECF_R$ has a monster model $M$ which is a homogeneous structure, and which is $\lambda$-stable (that is, it realises only $\lambda$ types over any set of size $\lambda$) for all $\lambda \ge |R|$. Hence $\ECF_R$ is a \emph{homogeneous class} in the sense of \cite{HySh:629}. Furthermore, by Theorem~1.17 of that paper, the cardinal invariant $\kappa(M)$ is equal to $\aleph_0$, and hence by definition (see Lemma~5.1 of that paper), the classes $\ECF_R$ are superstable. We can say informally that the class \ECF\ is \emph{superstable over the kernel}. There is a general theory of superstability in homogeneous classes, in close parallel to that for first-order theories.

\section{Complete elementary theory}

In this section we show that, if the conjecture CIT is true, then the class \ECF\ is an elementary class, and furthermore its elementary theory is complete.

\subsection{First-order axioms}

In section~\ref{classification of strong extensions}, all the definitions used in the statement of axiom 4 were explained. We now give a first-order axiom scheme which expresses the axiom, at least within the class of models of the other axioms.
\begin{lemma}
The following scheme of axioms is first-order expressible. 
\begin{multline*}
(\forall \bbar)(\exists \xbar)(\forall \mbar \in Z^{n+r})(\forall t \in \ker)(\forall(\bar{w},\ybar) \in V) \\
\Bigg[(\xbar,e^\xbar)\in V \wedge  
\left[\sum_{i=1}^nm_ix_i + \sum_{i=1}^r m_{i+r}b_i = t \to \sum_{i=1}^nm_iw_i + \sum_{i=1}^r m_{i+r}b_i = t \right] \Bigg]
\end{multline*}
where $n,r$ range over $\N^+$ and $V$ ranges over all the irreducible, rotund, multiplicatively free subvarieties of $G^n$ which are defined over $F$ and of dimension $n$.

Furthermore if $F$ is a model of axioms  1, 2$'$ and 3$'$ then it satisfies axiom 4 (\seacness) if and only if it satisfies the scheme.
\end{lemma}

\begin{proof}
In words, the axiom scheme states that for each $\bbar$ there is $(\xbar,e^\xbar) \in V$ with $\xbar$ not satisfying any $Z$-linear dependencies over $\bbar$ and the kernel except those which hold on all of $V$.

It is well-known (part of the \emph{fibre dimension theorem}) that when $V$ varies in a parametric family, the set of parameters such that $V$ is irreducible and of dimension $n$ is first-order definable in the field language. Theorem~3.2 of \cite{Zilber05peACF0} shows that the properties \emph{rotundity} and \emph{multiplicative freeness} of an algebraic variety are also first-order definable just in the field language. So the axiom scheme is first-order expressible.

Now suppose that $V$, $\bbar$, and $\xbar$ are as given. If $V$ happens also to be additively free then $\xbar$ does not satisfy any $\Z$-linear dependencies over $\bbar$ and the kernel , and hence $e^\xbar$ is multiplicatively independent over $e^\bbar$. Thus, using strong kernel and extending $\bbar$ if necessary so that $\bbar \sstrong F$ and $V$ is defined over $\bbar$, we deduce that $(\xbar,e^\xbar)$ has transcendence degree $n$ over $\bbar \cup e^\bbar \cup \ker$, and hence in particular it is generic in $V$ over $\bbar$. So the axiom scheme implies the SEAC property.

For the converse, suppose that $F \in \ECF$, that $V \subs G^n$ is irreducible, rotund, additively and multiplicatively free, and of dimension $n$, and that $\bbar \in F^r$. Extend $\bbar$ such that $\bbar \sstrong F$ and $V$ is defined over $\bbar$. Then the SEAC property gives $(\xbar,e^\xbar) \in V$, generic over $\bbar \cup e^\bbar$. In particular, since $V$ is multiplicatively free, we have $\mrk(e^\xbar/e^\bbar) = n$. So, since $\bbar \sstrong F$, we deduce that $\td(\xbar,e^\xbar/\bbar,e^\bbar,\ker(F)) = n$, and hence $(\xbar,e^\xbar)$ is generic in $V$ over $\bbar \cup \ker(F)$. Now $Z(F) = \tau^{-1} \ker(F)$, so $Z(F)$ is algebraic over $\ker(F)$. So $(\xbar,e^\xbar)$ is generic in $V$ over $Z(F)$ and thus $\xbar$ satisfies no $Z(F)$-linear dependencies over $\bbar \cup \ker(F)$, except those holding on all of $V$. So the SEAC scheme holds on $F$.
\end{proof}

\begin{prop}\label{ECF elementary}
If CIT is true then axioms 1, 2$'$, 3$'$ and 4 are first-order expressible, and so $\ECF$ is an elementary class.
\end{prop}
\begin{proof}
It is clear that axioms 1 and 2$'$ are first-order expressible. By Proposition~\ref{strong kernel is fo}, axiom 3$'$ (the Schanuel property over the kernel) is first-order expressible if CIT is true. Then by the above lemma, axiom 4 is first-order expressible.
\end{proof}

\subsection{Infinite exponential transcendence degree}

\begin{prop}\label{exp trans prop}
Suppose $F \in \ECF$ is $\lambda$-saturated, and $A \subs F$ is an exponential subfield of size less than $\lambda$. Then $F$ contains an element $b$ which is exponentially transcendental over $A$. In particular, the exponential transcendence degree of $F$, $\etd(F)$, is at least $\lambda$.
\end{prop}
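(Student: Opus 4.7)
The plan is a short pregeometry argument. The key structural input is the Fact cited immediately above from \cite{EAEF}: $\ecl^F$ is a pregeometry on $F$, so exponential transcendence degree satisfies the usual exchange-style dimension theory. That is the only substantive tool needed.

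My first step is to bound $\etd(A)$ measured inside the ambient pregeometry on $F$. Any independent set (in the pregeometry $\ecl^F$) that happens to be contained in $A$ has cardinality at most $|A|$, so in particular there is an exponential transcendence basis $B_0 \subs A$ of $A$ with $|B_0| \le |A| < \lambda$.

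Next, I argue by contradiction for the main claim. Suppose, toward a contradiction, that every element of $F$ is exponentially algebraic over $A$, i.e.\ $F \subs \ecl^F(A)$. Then $\ecl^F(B_0) \supseteq \ecl^F(A) = F$, so $B_0$ is also an exponential transcendence basis of $F$ itself. This forces $\etd(F) \le |B_0| \le |A| < \lambda$, contradicting the clause $\etd(F) \ge \lambda$ that is built into the definition of $\lambda$-saturation over the kernel. So some $b \in F$ lies outside $\ecl^F(A)$, which is by definition what it means for $b$ to be exponentially transcendental over $A$. The final ``in particular'' clause is then just a restatement of the relevant clause of the hypothesis.

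Main obstacle: there is none of substance. All the work has already been done in establishing that $\ecl^F$ is a pregeometry; given that, the proposition falls out from the trivial cardinality bound $\etd^F(A) \le |A|$ together with the built-in inequality $\etd(F) \ge \lambda$. The only thing worth emphasizing is that one must compute $\etd(A)$ inside the ambient pregeometry $\ecl^F$ rather than in some intrinsic pregeometry on $A$ alone, so that the subsequent comparison with $\etd(F)$ is meaningful; but since the inclusion $A \subs F$ makes $\ecl^F$ restrict to a pregeometry on $A$ with bases of size $\le |A|$, this causes no difficulty.
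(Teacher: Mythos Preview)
Your argument rests on a misreading of the hypothesis. In this proposition, ``$\lambda$-saturated'' means ordinary first-order $\lambda$-saturation, not the notion ``$\lambda$-saturated over its kernel'' defined earlier (which by definition includes $\etd(F)\ge\lambda$). You can see this both from the paper's own proof, which proceeds by showing a certain type is finitely satisfiable and then invokes saturation to realize it, and from how the proposition is used later in the completeness proof: there $F_\mu$ is a $\mu^+$-saturated elementary submodel in the usual sense, and the proposition is precisely what is needed to \emph{deduce} $\etd(F_\mu)\ge\mu^+$. So assuming $\etd(F)\ge\lambda$ as part of the hypothesis and then restating it as the ``in particular'' conclusion is circular; the proposition would have no content.

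The actual work, which your proposal bypasses entirely, is to show that the type of an element exponentially transcendental over $A$ is finitely satisfiable in $F$. The paper does this by, given finitely many Khovanskii-system formulas of width at most $N$, using SEAC to produce $b\in F$ with $b=\exp^N(b)$ and $(\exp^r(b))_{r=1}^{N}$ algebraically independent over $A$, and then arguing via the $\Delta$-predimension that such a $b$ cannot solve any Khovanskii system of width less than $N$ over $A$. That argument is where the axioms of $\ECFStrK$ enter; the pregeometry fact alone does not suffice, since first-order saturation gives you nothing about $\etd$ without first exhibiting a type to realize.
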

 We use CIT only implicitly here. By Proposition~\ref{false CIT}, if CIT is false then no $F \in \ECF$ is $\lambda$-saturated so the proposition is trivially true. 
\begin{proof}
Extend $A$ if necessary such that $A \sstrong F$. For an $n$-tuple $\fbar$ of exponential polynomials (without iterations of exponentiation), consider the formula $\chi_{\fbar}(\xbar)$ given by
 \[\bigwedge_{i=1}^n f_i(x_1,\ldots,x_n) = 0 \wedge \begin{vmatrix}
  \frac{\partial f_1}{\partial X_1} & \cdots &\frac{\partial
    f_1}{\partial X_n}\\
  \vdots & \ddots & \vdots \\
  \frac{\partial f_n}{\partial X_1} & \cdots &\frac{\partial
    f_n}{\partial X_n} \end{vmatrix} (x_1,\ldots,x_n) \neq 0.\]
The type of an exponentially transcendental element $x$ over $A$ is given by all the formulas
\[\neg \exists x_2,\ldots,x_n [\chi_{\fbar}(x, x_2,\ldots,x_n)] \]
where $\fbar$ ranges over all finite lists of exponential polynomials with coefficients from $A$.

Since $F$ is $\lambda$-saturated, we just have to show that this type is finitely satisfied inside $F$. Given a finite set of such formulas, let $N\in \N$ be greater than the width of any of the corresponding Khovanskii systems. Suppose $b_1 \in F$ and, setting $b_{r+1} = \exp(b_r)$ for $r = 1,\ldots,N$, we have $b_{N+1} = b_1$ but $b_1,\ldots b_N$ are algebraically independent over $A \cup \ker(F)$. Such a $b_1$ exists by axiom 4 (\seacness) and saturation. Write $\bbar$ for the tuple $(b_1,\ldots,b_N)$, write $A'$ for the algebraic closure of $A \cup \ker(F)$ and write $B$ for the $\Q$-linear span of $A' \cup \bbar$. Then $\Delta_F(B/A) = 0$, so $B \sstrong F$.

Now suppose that $n < N$ and that $\cbar = (c_1,\ldots, c_n)$ is an $n$-tuple from $B$. Let $C$ be the $\Q$-linear span of $A' \cup \cbar$. We claim that $\Delta_F(C/A) > 0$.

To see this, first we note that we can assume that the $c_i$ are $\Q$-linearly independent over $A'$. Let $d_i = \exp(c_i)$. Then the $d_i$ are Laurent monomials in the $b_i$, so $c_1,\ldots,c_n, d_1,\ldots,d_n \in A'(b_1,\ldots,b_N)$. Since the $b_i$ are algebraically independent over $A'$, we can identify this field with the field $A'(X_1,\ldots,X_N)$ of rational functions in $N$ variables over $A'$ and we identify $b_i$ with the variable $X_i$.

Now we extend $c_1,\ldots,c_n$ to a linear basis $c_1,\ldots,c_n,\ldots,c_N$ for $B$ over $A'$, such that each $c_i$ is a linear polynomial in the $X_i$, with coefficients from $\Q$ except for the constant term which may be from $A'$. So we also have an equality of fields $A'(X_1,\ldots,X_N)= A'(c_1,\ldots,c_N)$. Note that $c_1,\ldots,c_N$ is also a transcendence base of the field over $A'$. 

Suppose for a contradiction that $\td(C, \exp(C)/A')=n$. We will have that each $d_i\in A'(c_1,\ldots,c_n)^\alg$ and also by the above we have $d_i\in A'(c_1,\ldots,c_N)$. Using the algebraic independence of the $c_i$, it follows that $d_i\in A'(c_1,\ldots,c_n)$, that is, $d_i=r_i(c_1...c_n)$, a rational function. Looking more closely, we can see that $r_i$ is a product $\prod_k L_{k,i}$ of linear combinations $L_{k,i}$ of $c_1,\ldots,c_n$ and their inverses.

Now $d_1$ is a monomial in $X_1,\ldots,X_N$, so say $X_1$ occurs in a positive power. Let $c'_1,\ldots,c'_n$ be the results of substituting $X_1=0$. Then for some positive $L_{k,1}$ we have $L_{k,1}(c'_1,\ldots,c'_n)=0$. This proves that $L_{k,1}(c_1,\ldots,c_n)=X_1$. Hence $X_1\in C$. Hence $X_2 \in \exp(C)$, so $X_2$ is present say in $d_2$. Continuing by induction we get $X_1,\ldots,X_N\in C$, which contradicts $n < N$.

Hence $\td(\cbar,\exp(\cbar)/A') > n$, so $\Delta_F(C/A) >0$. By Proposition~\ref{hull}, there is a smallest $\Q$-linear subspace of $F$ containing $A' \cup \{b_1\}$ which is semistrong in $F$. The above calculation shows that it cannot be a proper subspace of $B$ and hence it is $B$ itself.

Now if we have $c_2,\ldots,c_n \in F$ and a Khovanskii system $\chi_\fbar$ such that $F \models \chi_\fbar(b_1,c_2,c_3,\ldots,c_n)$ then $\Delta_F(b_1,c_2,c_3,\ldots,c_n) = 0$. Hence the $\Q$-linear span of $b_1,c_2,\ldots,c_n$ over $A'$ contains $B$, and hence $n \ge N$.

Thus by our choice of $N$, $b_1$ satisfies the chosen finite part of the type of an element which is exponentially transcendental over $A$. Hence, by compactness and $\lambda$-saturation, $F$ contains an element which is exponentially transcendental over $A$.
\end{proof}

\subsection{The completeness proof}
\begin{theorem}\label{completeness theorem}
Assuming CIT, the first-order theory of \ECF, given by axioms 1, 2', 3', and 4, is complete. 
\end{theorem}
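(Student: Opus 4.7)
The plan is to deduce completeness from the categoricity of saturated models by invoking Theorem~\ref{saturation over the kernel}. Let $T$ be the first-order theory axiomatized by 1, 2$'$, 3$'$, 4; under CIT this is indeed a first-order theory, as already established. Given any two models $F, M \models T$, I would fix a regular cardinal $\lambda > 2^{\aleph_0}$ and pass to $\lambda$-saturated elementary extensions $F^* \succeq F$ and $M^* \succeq M$ of cardinality $\lambda$, which exist by standard model theory. It then suffices to show $F^* \cong M^*$, for then $F \equiv F^* \equiv M^* \equiv M$, and since $F, M$ were arbitrary, $T$ is complete.

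To apply Theorem~\ref{saturation over the kernel} with $F_{00} = M_{00} = \emptyset$, I need to verify each of its hypotheses for $F^*$ and $M^*$. Both have \vfk: by Lemma~\ref{vfk lemma} this amounts to realizing a countable family of $1$-types, which follows from $\aleph_1$-saturation. Both satisfy $\lambda$-SEAC: for each rotund, additively and multiplicatively free subvariety $V \subs G^n$ of dimension $n$ defined over a set $A$ with $|A| < \lambda$, the requirement that $(\xbar,e^\xbar) \in V$ be generic over $A$ is a type over $A$, and axiom $4$ applied to each finite subset of $A$ witnesses finite satisfiability, so $\lambda$-saturation realizes the type. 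Since $F^*$ is $\lambda$-saturated with $\lambda$ uncountable and the $Z$-sort is definable, the standard ``no new integers'' type argument forces $|Z(F^*)| = \lambda$, and the induced structure is a $\lambda$-saturated model of true arithmetic (by axiom 2$'$c); the same holds for $Z(M^*)$, so classical uniqueness of saturated models of a fixed complete theory in a fixed cardinality yields an isomorphism $\theta_Z : Z(F^*) \rIso Z(M^*)$. Finally, Proposition~\ref{exp trans prop} gives $\etd(F^*) \ge \lambda$, hence $\etd(F^*) = \lambda = \etd(M^*)$, so any bijection $\theta_B$ between exponential transcendence bases of cardinality $\lambda$ will do.

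With all hypotheses in place, Theorem~\ref{saturation over the kernel} (applied with empty $F_{00}$ and $M_{00}$, so that $\theta_{00}$ is the empty map and the compatibility conditions are vacuous) produces an isomorphism $F^* \rIso M^*$, completing the proof. The main substantive step in the argument is the bridge from $\lambda$-saturation of the whole structure to $\lambda$-SEAC and the relevant saturation/cardinality of the $Z$-sort; both rely essentially on CIT, because without CIT axiom 4 would not be first-order and $\lambda$-saturation could not be used to realize the SEAC-type. Everything else is a routine appeal to the back-and-forth machinery already established in Theorem~\ref{saturation over the kernel}.
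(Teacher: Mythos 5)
Your overall strategy matches the paper's: reduce completeness to the isomorphism of two ``big'' models by verifying the hypotheses of Theorem~\ref{saturation over the kernel}. The gap is in the choice of ``big'' model. You pass to $\lambda$-saturated elementary extensions $F^*, M^*$ \emph{of cardinality exactly $\lambda$} for some regular $\lambda > 2^{\aleph_0}$, and assert these ``exist by standard model theory.'' This is not a theorem of ZFC: a countable complete theory has a $\lambda$-saturated model of cardinality $\lambda$ only under cardinal-arithmetic hypotheses such as $\lambda^{<\lambda}=\lambda$ (GCH at the predecessor if $\lambda$ is a successor, inaccessibility if $\lambda$ is a limit), and there are models of ZFC in which no regular $\lambda>2^{\aleph_0}$ has this property. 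Since Theorem~\ref{saturation over the kernel} requires $|F|=|M|=\lambda$ \emph{together with} $\lambda$-SEAC, you also cannot dodge this by taking $F^*$ of larger cardinality than its saturation degree: the saturation parameter in the hypotheses must equal $|F^*|$.

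The paper's fix is to take \emph{special} models of cardinality $\lambda$, where $\lambda>2^{\aleph_0}$ and $\lambda=\sup\{2^\mu : \mu<\lambda\}$; such cardinals and such models always exist in ZFC, and elementarily equivalent special models of the same cardinality are isomorphic \cite[5.1.8, 5.1.17]{ChangKeisler}. The cost is that a special model is not $\lambda$-saturated, so $\lambda$-SEAC cannot be obtained by simply realizing the genericity type over $A$ as you do. Instead the paper exploits the chain $F=\bigcup_{\mu<\lambda}F_\mu$ of $\mu^+$-saturated models: with $\mu=|A|$, it produces $\mu^+$-many pairwise algebraically independent solutions of $V$ inside $F_\mu$ and uses a counting argument to pick one whose exponential is multiplicatively independent over $\exp(A)$, which together with semistrongness of $A$ yields genericity. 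Likewise, $\etd(F)=\lambda$ is obtained by noting that exponential algebraic independence is type-definable and hence preserved up the elementary chain, and $Z(F)\iso Z(M)$ follows because reducts of special models are special. Apart from this substitution of special for saturated models, and the consequent reworking of the $\lambda$-SEAC and $\etd$ verifications, your checks of the remaining hypotheses of Theorem~\ref{saturation over the kernel} (vfk via $\aleph_1$-saturation, $\etd\ge\lambda$ via Proposition~\ref{exp trans prop}, the isomorphism of the $Z$-sorts) are sound and parallel the paper's proof.
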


The idea of the proof is to show that two saturated models of the same cardinality are isomorphic. Most of the work for the proof was done in section~\ref{algebra section}, but we still have to relate saturation with respect to the first order theory to our notion of saturation over the kernel. Furthermore, to work within ZFC set theory we use special models instead of saturated models.
\begin{proof}
Assume CIT. So \ECF\ is an elementary class by Proposition~\ref{ECF elementary}. Let $\lambda$ be a cardinal such that $\lambda = \sup \class{2^\mu}{\mu < \lambda}$ and $\lambda > 2^{\aleph_0}$, which exists by \cite[5.1.7]{ChangKeisler}. Suppose that $F, M \in \ECF$ are special models of cardinality $\lambda$. That is, we can write $F = \bigcup_{\mu < \lambda} F_\mu$ and $M = \bigcup_{\mu < \lambda} M_\mu$, where $F_\mu$ and $M_\mu$ are $\mu^+$-saturated and form elementary chains. Every complete theory has exactly one special model of cardinality $\lambda$ \cite[5.1.8, 5.1.17]{ChangKeisler}, so it is enough to show that $F \iso M$.

Since $F$ and $M$ are special, and the rings $Z(F)$ and $Z(M)$ are $\emptyset$-definable subsets which model $\Th(\Z;+,\cdot)$, by \cite[5.1.6(v)]{ChangKeisler} they are special models of $\Th(\Z;+,\cdot)$. They are both of cardinality $\lambda$, and hence they are isomorphic. Note in particular that $Z(F)$ is $\aleph_0$-saturated, so $F$ and $M$ have \vfk.

Since $F_\mu$ is $\mu^+$-saturated it has exponential transcendence degree at least $\mu^+$ by Proposition~\ref{exp trans prop}, so there is an exponentially-algebraically independent subset $B \subs F_\mu$ of cardinality $\mu^+$. Being exponentially-algebraically independent is a type-definable property, so $B$ is still exponentially-algebraically independent as a subset of $F$. Thus $\etd(F) \ge \mu^+$ for each $\mu < \lambda$, and so $\etd(F) \ge \lambda$. Since $|F| = \lambda$ we have $\etd(F) = \lambda$. 

Now we show that $F$ has $\lambda$-SEAC. Let $A \subs F$ be a subset with $|A| = \mu < \lambda$, and let $V \subs G^n$ be rotund, and additively and multiplicatively free. By replacing $A$ by a larger subset (of the same cardinality) we may assume that $A$ is semistrong in $F$. Now $F_\mu$ is $\mu^+$-saturated and so satisfies $\mu^+$-SEAC, and hence there are $\bbar_\nu$ for $\nu < \mu^+$ with $(\bbar_\nu,e^{\bbar_\nu}) \in V$, algebraically independent of each other (over the field of definition of $V$). So, in particular, all the $e^{b_{\nu,i}}$ for $\nu < \mu^+$ and $i=1,\ldots,n$ are multiplicatively independent. Since $|A| = \mu < \mu^+$, there must be some $\nu_0$ such that $e^{\bbar_{\nu_0}}$ is multiplicatively independent over $\exp(A)$. But since $A$ is semistrong, $\td(\bbar_{\nu_0},e^{\bbar_{\nu_0}}/A,\exp(A)) \ge \mrk(e^{\bbar_{\nu_0}}/ \exp(A))=n$ and hence $(\bbar_{\nu_0},e^{\bbar_{\nu_0}})$ is generic in $V$ over $A$, as required.

The same arguments show that $\etd(M) = \lambda$ and that $M \models \lambda$-SEAC. Thus, by Theorem~\ref{saturation over the kernel}, there is an isomorphism $F \iso M$.
\end{proof}

\subsection{Exponential-algebraic closedness}

In the second author's paper \cite{Zilber05peACF0}, the \seacness\ property was considered alongside the apparently weaker  \emph{exponential-algebraic closedness} property. We now state this property in a simpler form than in the earlier paper and give several equivalent statements. We then show that, under CIT, it can replace \seacness\ as an axiom for the class \ECF.

An ELA-field $F$ is said to be \emph{exponentially-algebraically closed}, or EAC, if and only if for every $n \in \N^+$ and every rotund subvariety $V$ of $G^n$ defined over $F$ there is $\xbar$ in $F$ such that $(\xbar,e^\xbar) \in V$. The EAC property is first-order axiomatizable because rotundity is definable in the field language \cite[Theorem~3.2(5)]{Zilber05peACF0}. (The notion of rotundity, while tailored to exponentiation, is an algebraic notion not using the exponential structure, so in particular it does not matter that that paper only dealt with exponential fields with standard kernel.)

The next two lemmas summarize some variant but equivalent statements of exponential-algebraic closedness.

\begin{lemma}\label{first EAC lemma}
Let $F$ be an ELA-field of infinite transcendence degree (for example, satisfying the strong kernel axiom). Then $F \models $ EAC if and only if for every $n \in \N^+$ and every rotund subvariety $V$ of $G^n$ defined over $F$ which is
\begin{enumerate}[(i)]
\item irreducible;
\item of dimension $n$;
\item multiplicatively free; and
\item additively free
\end{enumerate}
there is $(\xbar,e^\xbar) \in V$. In particular, if $F \in \ECF$ then $F \models $ EAC.
\end{lemma}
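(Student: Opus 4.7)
The forward direction is immediate, since any variety satisfying (i)--(iv) is in particular rotund. For the converse, let $V\subs G^n$ be an arbitrary rotund subvariety defined over $F$; the plan is to perform a sequence of reductions to obtain a subvariety $W$ satisfying (i)--(iv), apply the hypothesis to produce $(\xbar,e^\xbar)\in W$, and push this back to a solution in $V$.

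First, taking $M = I$ in the rotundity condition already yields $\dim V \ge n$. If $\dim V > n$, then cut $V$ with generic affine hyperplanes of $\ga^n$ defined over $F$, which is possible since $F$ has infinite transcendence degree; by the fibre dimension theorem a sufficiently generic hyperplane section remains rotund, and iterating brings the dimension down to $n$. Passing to an irreducible component of top dimension preserves both rotundity and dimension, since for any integer matrix $M$ the image $M\cdot V$ decomposes as the union of the images of the components, and only the top-dimensional ones control $\dim M\cdot V$. Finally, if the resulting variety is not multiplicatively free, its $\gm^n$-projection sits in a proper coset $c\cdot H$ with $H = \{\ybar^M = \bar 1\}$; since $F$ is ELA, pick $\zbar \in \ga^n(F)$ with $e^\zbar = c$, translate coordinates by $(\zbar, c)$, and then use a unimodular completion of $M$ to descend to a rotund subvariety of $G^{n - \rk M}$. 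The additively-not-free case is analogous, using an affine coset in $\ga^n$. After finitely many such steps we arrive at $W$ satisfying (i)--(iv).

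Applying the hypothesis to $W$ yields $(\xbar, e^\xbar) \in W$, and unwinding the reductions produces a point in $V$ of the required form, proving EAC. For the ``in particular'' clause, any $F \in \ECFStrK$ satisfies axiom~4, which for any rotund, additively and multiplicatively free $V$ of dimension $n$ provides $(\xbar, e^\xbar) \in V$ generic over any prescribed parameter set --- in particular such a point exists, so the restricted hypothesis of the present lemma is verified, and the equivalence just proved gives $F \models$ EAC. The principal technical obstacle is the rotundity bookkeeping: verifying that generic hyperplane sections preserve rotundity in the presence of the torus action, and that the coset-quotient descent in the multiplicative and additive reductions sends rotund varieties to rotund varieties of the correct dimension.
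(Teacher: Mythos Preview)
Your overall architecture matches the paper's: reduce an arbitrary rotund $V$ to one satisfying (i)--(iv) via passing to a component, cutting by hyperplanes, and quotienting out multiplicative/additive cosets. Two points deserve correction or comment.

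\textbf{Irreducibility.} Your justification for step (i) is not right. From ``$\dim M\cdot V$ is the maximum over components'' it does \emph{not} follow that a single top-dimensional component $W$ satisfies $\dim M\cdot W \ge \rk M$ for every $M$; different matrices could be witnessed by different components. In the paper this step is handled definitionally: rotundity is defined for irreducible varieties, and a reducible $V$ is called rotund exactly when some irreducible component is. So one simply passes to such a component, which need not be top-dimensional.

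\textbf{Hyperplane sections.} For step (ii) the paper cuts with a generic hyperplane in the full ambient $G^n$, namely $X=\{\sum_{i=1}^n a_i x_i + a_{n+i} y_i = 1\}$ with the $a_i$ algebraically independent over the field of definition of $V$; this is where the infinite transcendence degree hypothesis is used. The point is that for $\rk M < n$ the generic choice guarantees $\dim M\cdot(V\cap X)\ge \rk M$, so $V\cap X$ remains rotund. Your proposal cuts only in the $\ga^n$ coordinates. That may still work, but the rotundity bookkeeping is more delicate (the hyperplane is no longer generic with respect to the $\gm^n$ coordinates, which is exactly where the rotundity inequality lives), and you have not supplied the argument. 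The mixed hyperplane is the standard device here and the one the paper uses.

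Your treatment of (iii) by translating by a coset representative and descending to $G^{n-\rk M}$ is a cosmetic variant of the paper's, which instead intersects $V$ with the additive hyperplane $\sum m_i x_i = a$ (where $e^a=c$) and projects to $G^{n-1}$, inducting on $n$; both lead to the same place. Your ``in particular'' clause is handled exactly as in the paper.
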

\begin{proof}
The $\Rightarrow$ direction is immediate. For the $\Leftarrow$ direction, we consider $(i)$---$(iv)$ in turn. 

For $(i)$, a reducible $V$ is rotund by definition if and only if some irreducible component of it is rotund. For $(ii)$, if $V \subs G^n$ is rotund but $\dim V > n$, choose $a_1,\ldots,a_{2n} \in F$, algebraically independent over the field of definition of $V$, and let $X$ be the generic hyperplane in $G^n$ given by the equation
\[ \sum_{i=1}^n a_ix_i + a_{n+i} y_i = 1\]
Then for any $M \in \Mat_{n\cross n}(\Z)$, with $\rk(M) < n$, we have
\[\dim M \cdot (V \cap X) = \max(\dim M\cdot V, \rk M)\]
so $V\cap X$ is rotund. (A more detailed version of the same argument is given in the proof of Proposition~2.33, Step~1 in \cite{TEDESV}.) Since $\dim (V \cap X) = \dim V - 1$, we are done by induction on $\dim V - n$.

For $(iii)$, suppose $V$ is not multiplicatively free. If $n=1$ then $V$ has the form $y=c$ for some $c\in \gm(F)$. Since $F$ is an ELA-field, there is $a \in F$ be such that $\exp_F(a) = c$, and $(a,c) \in V$. If $n > 1$, then there are $m_i \in \Z$, not all zero, and $c \in \gm(F)$ such that a point $(\xbar,\ybar)$ of $V$ which is generic over $F$ satisfies $\prod_{i=1}^n y_i^{m_i} = c$. Since $V$ is rotund, $\sum_{i=1}^n m_i x_i$ is transcendental over $F$. Let $V' = V \cap \gen{\sum_{i=1}^n m_i x_i = a}$ where $\exp_F(a) = c$. Then $\dim V' = \dim V - 1$. Assume without loss of generality that $m_n \neq 0$. Consider the $n-1 \times n$ matrix
\[M = \begin{pmatrix} 1 &  & 0 & 0   \\
 & \ddots & & \vdots\\
0 & & 1 & 0 
\end{pmatrix}  \]
and let $V'' = M \cdot V' \subs G^{n-1}$.
Then $\dim V'' = \dim V'$, and indeed for any $M' \in \Mat_{n-1 \times n-1}(\Z)$, we have 
\[\dim M'\cdot V'' = \dim \begin{pmatrix}M' & 0\\ 0 & 0 \end{pmatrix}  \cdot V \ge \rk M'\]
since $V$ is rotund, and hence $V''$ is rotund. By induction on $n$, there are $a_1,\ldots,a_{n-1} \in F$ such that $(a_1,\ldots,a_{n-1},e^{a_1},\ldots,e^{a_{n-1}}) \in V''$, and then taking $a_n$ such that $\sum_{i=1}^n m_i a_i = a$ we have $(\abar,e^\abar) \in V$.

The proof of $(iv)$ is similar to $(iii)$.
\end{proof}

The hypothesis that $F$ has infinite transcendence degree is harmless from our point of view, but recent work by Vincenzo Mantova \cite{Mantova12} suggests that it is unnecessary, because a suitably generic substitute for the generic hyperplane $X$ can be found by other means.

The condition shown in the next lemma to be equivalent to EAC is in fact the original definition of EAC from \cite{Zilber05peACF0}.
\begin{lemma}\label{V - V' lemma}
Let $F$ be an ELA-field of infinite transcendence degree. Then $F \models $ EAC if and only if for every $n \in \N^+$ and every irreducible rotund subvariety $V$ of $G^n$ and every proper (not necessarily irreducible) subvariety $V' \subs V$, there is $(\xbar,e^\xbar) \in V \minus V'$.
\end{lemma}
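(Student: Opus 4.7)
The backward direction is immediate: taking $V' = \emptyset$ in the stronger condition recovers the statement of EAC.

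For the forward direction, given an irreducible rotund subvariety $V \subs G^n$ and a proper subvariety $V' \subsetneq V$, the plan is to construct an auxiliary rotund variety $W \subs G^{n+1}$ whose exponential points yield points of $V \minus V'$. Since $V$ is irreducible and $V'$ is proper, the ideal of $V'$ in the coordinate ring of $V$ is nonzero, so I can choose a nonzero polynomial $g \in F[\xbar,\ybar]$ (after clearing any $\ybar$-denominators by multiplication by a monomial) that vanishes on $V'$ but not identically on $V$. Define
\[W = \left\{((\xbar, x_{n+1}),(\ybar, y_{n+1})) \in G^{n+1} : (\xbar,\ybar) \in V \text{ and } y_{n+1}\, g(\xbar,\ybar) = 1\right\},\]
leaving $x_{n+1}$ unconstrained. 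Then $W$ is irreducible with $\dim W = \dim V + 1 \ge n+1$, and any exponential point $((\xbar, x_{n+1}),(e^\xbar, e^{x_{n+1}})) \in W$ satisfies $(\xbar, e^\xbar) \in V$ together with $e^{x_{n+1}}\, g(\xbar,e^\xbar) = 1$; this forces $g(\xbar,e^\xbar) \neq 0$ and hence $(\xbar,e^\xbar) \in V \minus V'$.

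The core of the argument is to verify that $W$ is rotund, so that EAC (together with the previous Lemma (a) to reduce to the $(n+1)$-dimensional case if $\dim W > n+1$) applies. Write $M \in \Mat_{(n+1)\times(n+1)}(\Z)$ in block form with $n \times n$ leading submatrix $A$, right column $\bar u$ of length $n$, bottom row $\bar v^T$ of length $n$, and bottom-right scalar $w$. If the last column $(\bar u, w)^T$ of $M$ vanishes, then $M$ acts trivially on the last coordinate and $M \cdot W$ coincides with the image of $V$ under the rectangular $(n+1) \times n$ matrix with rows given by those of $A$ and $\bar v^T$; the required inequality $\dim M \cdot W \ge \rk M$ then follows from rotundity of $V$ applied to an $n \times n$ submatrix of that rectangular matrix of maximal rank. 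If the last column is nonzero, the freedom of $x_{n+1}$ on $W$ supplies an extra additive degree of freedom, and the generic nondegeneracy of $y_{n+1} = 1/g(\xbar,\ybar)$ on the multiplicative part of $V$ yields $\dim M \cdot W \ge \rk M$ by a direct dimension calculation.

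Applying EAC to $W$ then produces an exponential point of $W$, which by construction projects to the desired point of $V \minus V'$. I expect the main technical obstacle to lie in the second case of the rotundity verification, where the new coordinate $x_{n+1}$ and the specific choice $y_{n+1} = 1/g(\xbar, \ybar)$ must together be shown to contribute enough dimension when $M$ mixes the new coordinate with the original ones in both additive and multiplicative ways.
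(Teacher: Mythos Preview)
Your approach is essentially the paper's Rabinovich trick, with one harmless twist: you constrain the new \emph{multiplicative} coordinate via $y_{n+1}\,g(\xbar,\ybar)=1$ and leave the additive $x_{n+1}$ free, whereas the paper does the dual thing, imposing $g(\xbar,\ybar)\,u=1$ on the new additive coordinate $u$ and leaving the new multiplicative coordinate $v$ free. Either version produces a rotund $W$, and the reference to Lemma~(a) is unnecessary since EAC applies directly to any rotund variety regardless of its dimension.

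Where your write-up falls short is exactly where you flag it: the rotundity check. Your case split on whether the last column of $M$ vanishes handles the first case cleanly, but the second case (``direct dimension calculation'') is not an argument. The paper avoids a case split entirely: writing $M=(M_1\,|\,M_2)$ with $M_1$ the first $n$ columns, it takes a generic point $(\abar,c,\bbar,d)$ of $W$, sets $(\gamma,\delta)=(M_1\abar,\bbar^{M_1})$, observes that $(\gamma,\delta)$ is generic in $M_1\cdot V$, and uses rotundity of $V$ (for the rectangular $M_1$, reduced to the square case) to bound $\td(\abar,\bbar/F,\gamma,\delta)\le\dim V-\rk M_1$. Since the free new coordinate contributes at most one to $\td(c,d/F,\gamma,\delta,\abar,\bbar)$ and $F(\gamma,\delta,c,d)=F(\alpha,\beta,c,d)$, one gets $\td(\abar,\bbar,c,d/F,\alpha,\beta)\le\dim W-\rk M$, whence $\dim M\cdot W\ge\rk M$ by the fibre dimension theorem. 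The same computation goes through verbatim for your dual $W$ (with the roles of $c$ and $d$ swapped), so you should replace your second case with this generic-point argument rather than leave it as a promissory note.
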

\begin{proof}
This time the $\Leftarrow$ direction is immediate. We use the classical Rabinovich trick for the $\Rightarrow$ direction. Suppose $V$ is given by polynomial equations $f_i(\xbar,\ybar) = 0$ for $i=1,\ldots,r$, and $V'$ is given by equations $g_i(\xbar,\ybar) = 0$ for $i=1,\ldots,s$. We may assume that $g_1(\xbar,\ybar)$ does not vanish on $V$.

Consider the variety $W \subs G^{n+1}$ given by the equations $f_i(\xbar,\ybar) = 0$ for $i=1,\ldots,r$ and $g_1(\xbar,\ybar) u = 1$, where $u$ is the coordinate for an extra $\ga$, and we also consider a coordinate $v$ for the corresponding $\gm$.

It is enough to show that $W$ is rotund, since then by EAC there are $a_1,\ldots,a_{n+1} \in F$ such that $(a_1,\ldots,a_{n+1}, e^{a_1},\ldots,e^{a_{n+1}}) \in W$, and hence $(a_1,\ldots,a_n, e^{a_1},\ldots,e^{a_n}) \in V$ with $g_1(a_1,\ldots,a_n, e^{a_1},\ldots,e^{a_n}) \neq 0$, so $(a_1,\ldots,a_n, e^{a_1},\ldots,e^{a_n})\notin V'$.

Clearly $\dim W = \dim V + 1$, because the variable $u$ is constrained but the variable $v$ is unconstrained. Now let $M \in \Mat_{n+1\cross n+1}(\Z)$, let $(\alpha,\beta) \in M \cdot W$ be generic over $F$ (with $\alpha \in \ga^{n+1}$ and $\beta \in \gm^{n+1}$), and let $(\abar,c,\bbar,d)$ be generic over $F\cup (\alpha,\beta)$ in the fibre of $(\alpha,\beta)$ of the map $W \rOnto M\cdot W$, where $\abar$ is the tuple of values of the coordinates $\xbar$, $\bbar$ is the values of $\ybar$, $c$ is the value of $u$, and $d$ is the value of $v$.

Say $M = \left( M_1 | M_2 \right)$, where $M_1$ is an $(n+1) \times n$ matrix, and $M_2$ is an $(n+1) \times 1$ matrix.
Then we have $M \cdot (\abar c, \bbar d) = (\alpha,\beta)$, or equivalently 
\[M_1 \abar = \alpha - M_2 c =: \gamma \qquad \mbox{ and } \qquad  \bbar^{M_1} = \beta / d^{M_2} =: \delta.\]
Now $(\alpha,\beta)$ is generic in $M\cdot W$ over $F$, and it follows that $(\gamma,\delta)$ is generic in $M_1 \cdot V$ over $F$, and indeed $(\abar,\bbar)$ is a generic point of the fibre of $(\gamma,\delta)$ of the map $V \rOnto M_1\cdot V$. Thus, by the fibre dimension theorem, and rotundity of $V$ we have
\[\td(\abar,\bbar/F,\gamma,\delta) = \dim V - \dim M_1 \cdot V \le \dim V - \rk M_1.\]

Now $\td(c,d/F,\gamma, \delta, \abar,\bbar) \le 1$, and $\rk M \ge \rk M_1$, so 
\[\td(\abar,\bbar,c,d/F,\gamma,\delta) \le \dim V + 1 - \rk M = \dim W - \rk M\]
but also 
$\td(\abar,\bbar,c,d/F,\gamma,\delta) = \td(\abar,\bbar,c,d/F,\alpha,\beta)$, which is the dimension of a generic fibre of $W \rOnto M\cdot W$ so by the fibre dimension theorem again we have $\dim M \cdot W \ge \rk M$. Hence $W$ is rotund, as required.
\end{proof}

We can now prove Theorem~\ref{EAC theorem}. For convenience, we restate it.
\begin{theorem}
Assuming CIT, the theory \ECF\ is axiomatized by axioms 1, 2$'$, 3$'$, and EAC. 
\end{theorem}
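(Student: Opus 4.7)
The plan is to leverage Theorem~\ref{completeness theorem}: under CIT, $T := \Th(\ECFStrK)$ is a complete first-order theory. Let $T_0$ be the first-order theory axiomatized by 1, 2$'$, 3$'$, and EAC. By Lemma (a), every model of \ECFStrK\ satisfies EAC, so $T_0 \subseteq T$. Since $T$ is complete, to conclude $T_0 = T$ it suffices to show that every model of $T_0$ has an elementary extension satisfying axiom 4 (SEAC); then that extension lies in \ECFStrK, satisfies $T$, and by elementary equivalence the original model satisfies $T$ as well.

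So let $F \models T_0$, and take an $\aleph_1$-saturated elementary extension $F \preccurlyeq F^*$. All the axioms of $T_0$ are first-order, so $F^* \models T_0$; moreover $F^*$ has infinite transcendence degree (by saturation, or by a direct argument from the strong kernel axiom), which lets us apply Lemma~\ref{V - V' lemma}. To verify SEAC for $F^*$, fix a rotund, additively and multiplicatively free $V \subseteq G^n$ of dimension $n$ defined over parameters $\cbar$ from $F^*$, and a finite tuple $\abar$ from $F^*$. Using the reductions in Lemma (a), we may replace $V$ by an irreducible rotund component of dimension $n$ that is still additively and multiplicatively free, and then it suffices to find $(\xbar, e^\xbar) \in V$ generic in $V$ over $\abar$.

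The key observation is that genericity of $(\xbar, e^\xbar)$ in the irreducible $V$ over $\abar \cup \cbar$ is captured by the partial type
\[ p(\xbar) = \{\, (\xbar, e^\xbar) \in V\,\} \cup \class{(\xbar, e^\xbar) \notin W}{W \subsetneq V \text{ proper subvariety defined over } \abar\cbar}, \]
which is expressible in the field language. For finite satisfiability, take any finite list $W_1, \dots, W_k$ of proper subvarieties of $V$; their union is a proper subvariety of $V$ because $V$ is irreducible. Lemma~\ref{V - V' lemma} then gives some $(\xbar, e^\xbar) \in V \setminus (W_1 \cup \cdots \cup W_k)$ in $F^*$, witnessing the finite fragment. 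By $\aleph_1$-saturation, $p$ is realised in $F^*$, and the realisation is generic in $V$ over $\abar$ as required. Hence $F^* \models$ SEAC, so $F^* \in \ECFStrK$ and $F \equiv F^* \models T$.

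The main obstacle is not the saturation argument itself but ensuring the reduction to irreducible $V$ is compatible with the genericity clause in SEAC, and verifying that $F^*$ meets the hypothesis of Lemma~\ref{V - V' lemma} (infinite transcendence degree). Once those reductions are in place, the Rabinovich-style equivalence between EAC and the "$V \setminus V'$" statement does all the work, exactly as in the proof of that lemma.
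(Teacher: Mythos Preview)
Your argument is correct and follows the same core strategy as the paper: pass to a saturated elementary extension, use Lemma~(b) together with saturation to produce the point required by SEAC, and conclude that the two first-order axiom sets (under CIT) have the same models. A few minor points of difference are worth noting. First, your appeal to Theorem~\ref{completeness theorem} is unnecessary: once every model of $T_0$ is shown elementarily equivalent to a model in \ECFStrK, the equality $\Mod(T_0)=\Mod(T)$ follows directly from $T_0\subseteq T$ without invoking completeness. Second, the paper does not realize the full generic type directly; instead it first chooses a finite $\cbar\sstrong F$ with the given parameters contained in $e^\cbar$ and $V$ defined over $\cbar$, realizes only the weaker type asserting that $e^\xbar$ is multiplicatively independent over $e^\cbar$, and then invokes the strong kernel axiom to upgrade this to genericity of $(\xbar,e^\xbar)$ in $V$ over $\cbar$. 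Your direct realization of the generic type is a slight streamlining. Finally, $\aleph_0$-saturation already suffices (as in the paper), since your type $p(\xbar)$ is over the finite parameter set $\abar\cbar$.
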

\begin{proof}
Suppose that $F$ satisfies axioms 1, 2$'$, 3$'$, and EAC, and is $\aleph_0$-saturated. (CIT is implicitly needed here, since otherwise there are no $\aleph_0$-saturated models of axiom 3$'$.) Suppose $V \subs G^n$ is rotund, additively and multiplicatively free, and of dimension $n$. Then, since $F$ has strong kernel, for each finite tuple $\bbar \in F$ there is a finite tuple $\cbar$ such that $V$ is defined over $\cbar$, $\cbar \sstrong F$, and $\bbar \subs e^\cbar$. Then by EAC, Lemma~\ref{V - V' lemma}, and $\aleph_0$-saturation, there is $\abar$ such that $(\abar, e^\abar) \in V$ and $e^\abar$ is multiplicatively free over $e^\cbar$. By strong kernel again, $(\abar,e^\abar)$ is generic in $V$ over $\bbar$. Thus, using Lemma~\ref{first EAC lemma}, the two sets of axioms (1, 2$'$, 3$'$, and SEAC, and 1, 2$'$, 3$'$, and EAC), have the same $\aleph_0$-saturated models. But (assuming CIT), they are both lists of first-order axioms, and hence they have the same models.
\end{proof}

\section{Corollaries of CIT}

Our main theorem allows us to see that various statements follow from CIT. In particular, the first below says essentially that $\Z$ is stably embedded in $\B$.
\begin{theorem}
Assume CIT is true, so \ECF\ is a complete first-order theory. Then:
\begin{enumerate}[(1)]
\item Every subset of $Z^n$ which is definable (with parameters) in the theory \ECF\ is also definable with parameters in $Z$, and hence is definable (with parameters) in the theory $\Th(\Z;+,\cdot)$.

\item \ECF\ has quantifier elimination in the language $L'$, which is defined to be $\tuple{+,\cdot,0,1,\exp}$ expanded by predicates for every definable subset of \Z\ and for all $\exists$-formulas. 

\item If $F \subs M$ with $F,M \in \ECF$, then $F \elsubs M$ if and only if $Z(F) \elsubs Z(M)$ and $F \sstrong M$.

\item Let $R_1, R_2 \models \Th(\Z;+,\cdot)$, and suppose $\theta_R: R_1 \into R_2$ is a ring embedding such that $R_1$ is relatively algebraically closed in $R_2$. Then there are $F_1, F_2 \in \ECF$ with $Z(F_i) = R_i$ and a semistrong embedding $\theta: F_1 \sstrong F_2$ extending $\theta_R$. If $\theta_R$ is an elementary embedding then $\theta$ is also an elementary embedding.

\end{enumerate}
\end{theorem}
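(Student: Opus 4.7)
The plan is to derive all four parts from two tools available under CIT: the completeness of the theory $T$ of \ECFStrK\ (Theorem~\ref{completeness theorem}), and the back-and-forth uniqueness of models saturated over their kernels (Theorem~\ref{saturation over the kernel}).

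For parts (1) and (2), I would work inside a sufficiently saturated monster $\mathbb{M}$ of $T$. Stable embeddedness of $Z$ reduces to the standard orbit criterion: given $\sigma \in \Aut(Z(\mathbb{M}))$, apply Theorem~\ref{saturation over the kernel} with $F = M = \mathbb{M}$, $\theta_Z = \sigma$, empty base $F_{00}$, and any bijection $\theta_B$ between exponential transcendence bases, obtaining an automorphism of $\mathbb{M}$ that extends $\sigma$; this gives (1). The same back-and-forth shows that two tuples in $\mathbb{M}$ which agree on all $\exists$-formulas and on all $Z$-definable predicates (applied to their $Z$-components) are $\Aut(\mathbb{M})$-conjugate, hence have the same complete type; this yields quantifier elimination in the expanded language, proving (2).

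For (3), the forward direction is by CIT: each clause of $F \sstrong M$ is first-order expressible (the predimension inequality becoming first-order by \S\ref{strong kernel is fo}), so its failure would transfer by elementarity to a witness in $F$ contradicting strong kernel. For the converse, assume $F \sstrong M$ and $Z(F) \elsubs Z(M)$, pass to a saturated elementary extension $M^* \succ M$, and extend $F$ to some $F^*$ of the same cardinality in \ECFStrK\ which is saturated over its kernel and semistrongly embedded in $M^*$, via Proposition~\ref{sat over kernel} together with the amalgamation of strong extensions from \S\ref{classification of strong extensions}. Theorem~\ref{saturation over the kernel}, applied with $F_{00} = M_{00} = F$, produces an isomorphism $F^* \iso M^*$ fixing $F$, which combined with $F \elsubs F^*$ (the latter following from the forward direction, since $F^*$ was built as an iterated kernel-preserving strong extension of $F$) yields $F \elsubs M$.

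For (4), construct $F_1$ and $F_2$ in succession. Start from the standard-kernel partial E-field; extend its kernel to $R_1\tau$ via Corollary~\ref{Fker cor}; take the free ELA-closure via Proposition~\ref{vfk prop} (possible since $R_1$ is algebraically compact and so has \vfk); and close off in \ECFStrK\ via Proposition~\ref{sat over kernel} to obtain $F_1$. To produce $F_2$, first adjoin a transcendence basis $B$ of $R_2$ over $R_1$ as elements algebraically independent over $F_1$, then embed $R_2$ algebraically over $R_1(B)$ into the resulting field, arranging that $R_2\tau \cap D(F_1) = R_1\tau$; the hypothesis that $R_1$ is relatively algebraically closed in $R_2$ is exactly what is needed to make this arrangement possible. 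Reapply Corollary~\ref{Fker cor}, Proposition~\ref{vfk prop}, and Proposition~\ref{sat over kernel} in turn to obtain $F_2 \in \ECFStrK$ together with a semistrong embedding $\theta:F_1 \sstrong F_2$ extending $\theta_R$. If $\theta_R$ is elementary then $Z(F_1) \elsubs Z(F_2)$ and $F_1 \sstrong F_2$, so part (3) promotes $\theta$ to an elementary embedding. The most delicate points will be the kernel-extension step in (4), where the arrangement $R_2\tau \cap D(F_1) = R_1\tau$ must be carried out compatibly with the later free ELA-closure, and the verification of $F \elsubs F^*$ in (3) without circularity with the forward direction.
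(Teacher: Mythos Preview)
Parts (1) and (2) match the paper: both extract stable embeddedness and quantifier elimination from the automorphism-extension content of Theorem~\ref{saturation over the kernel}.

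In part (3) you diverge from the paper, which derives (3) in one line from (2): once every formula is equivalent to a Boolean combination of $Z$-predicates and $\exists$-formulas, $F \elsubs M$ reduces to $Z(F)\elsubs Z(M)$ together with downward preservation of $\exists$-formulas, and the paper's observation (made in the proof of (2)) is that failure of $F \sstrong M$ is witnessed precisely by an $\exists$-formula over $F$ that holds in $M$ but not in $F$. Your forward direction is slightly mis-cited---\S\ref{strong kernel is fo} axiomatizes the predimension inequality over $\ker$, not over an arbitrary subfield $F$---but the paper's $\exists$-formula observation gives what is needed. Your converse argument has a genuine gap: building $F^*$ via Proposition~\ref{sat over kernel} yields a \emph{kernel-preserving} extension, so $Z(F^*)=Z(F)$, which cannot be isomorphic to the properly larger $Z(M^*)$, and the hypothesis $\theta_Z: Z(F^*)\iso Z(M^*)$ of Theorem~\ref{saturation over the kernel} is therefore unavailable. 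The repair is to take $F^*$ to be a first-order $\lambda$-saturated elementary extension of $F$ (available since, under CIT, \ECFStrK\ is elementary); then $Z(F^*)$ and $Z(M^*)$ are both $\lambda$-saturated models of $\Th(\Z;+,\cdot)$ and, using $Z(F)\elsubs Z(M)$, one finds an isomorphism between them fixing $Z(F)$. This also dissolves the circularity you flagged, since $F\elsubs F^*$ is then by construction.

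In part (4), your claim that $R_1$ is algebraically compact is false in general (take $R_1=\Z$), so Proposition~\ref{vfk prop} is unavailable for forming the free ELA-closure. The paper avoids this by citing \cite[Theorem~6.2]{FPEF}, which builds a model in \ECFStrK\ with prescribed $Z$ directly, noting that the countability hypothesis there is used only for uniqueness, not existence. Your overall shape---build $F_1$, extend the kernel via Corollary~\ref{Fker cor}, build $F_2$, then apply (3)---agrees with the paper; only the tool invoked at the ELA-closure step needs to be replaced.
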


\begin{proof}
\begin{enumerate}[(1)]
\item By Theorem~\ref{saturation over the kernel}, every automorphism of $Z(F)$ extends to an automorphism of $F$, for a sufficiently saturated model $F$, and this is enough by Lemma~1 of the appendix of \cite{CH99}.

\item Suppose that $F$ is a special model of \ECF, and $\abar$, $\bbar$ are $n$-tuples from $F$ with the same quantifier-free $L'$-type. Then there are finite tuples $\zbar$ from $Z(F)$ and $\cbar$ from $F$ such that $\algindep{\abar}{Z(F)}{\zbar}$, $\cbar \sstrong F$ and $\abar$ lies in the $\Q$-linear span of $\zbar \cup \cbar$.

Since $L'$ contains symbols for all definable subsets of $Z$, and $F$ is $\aleph_0$-saturated, there is $\wbar$ in $Z(F)$ such that $\tp_{L'}(\wbar/\bbar) = \tp_{L'}(\zbar/\abar)$. Let $V = \loc(\cbar,e^\cbar/\zbar,\abar)$. Then, since $L'$ contains predicates for $\exists$-formulas, there is $\dbar$ in $F$ such that $\loc(\dbar,e^\dbar/\wbar,\bbar) = V$. Furthermore, since $F$ is $\aleph_0$-saturated and any failure of semistrongness would be witnessed by an existential formula, we may assume that $\dbar \sstrong F$.

Since $F$ is special, there is an automorphism $\theta_Z$ of $Z(F)$ taking $\zbar$ to $\wbar$. Let $Z_0$ be a countable elementary substructure of $Z(F)$ containing $\zbar$. Let $F_{00}$ be the partial E-subfield of $F$ with $D(F_{00})$ spanned by $Z_0 \cup \cbar$, and let $M_{00}$ be the partial E-subfield of $F$ with $D(M_{00})$ spanned by $\theta_Z(Z_0) \cup \dbar$. Then $\theta_Z\restrict{Z_0}$ extends to an isomorphism $\theta_{00} : F_{00} \rIso M_{00}$. Hence, by Theorem~\ref{saturation over the kernel}, there is an automorphism $\theta$ of $F$ taking $\zbar \cup \cbar$ to $\wbar \cup \dbar$. Now $\abar$ is a $\Q$-linear combination of $\zbar \cup \cbar$ and $\bbar$ is the same linear combination from $\wbar \cup \dbar$, so $\theta(\abar) = \bbar$. Quantifier elimination follows.

\item This follows from (2) and its proof.

\item First build a model $F_1$ such that $Z(F_1) = R_1$, by a standard inductive construction such as given in the proof of \cite[Theorem~6.2]{FPEF}. (The hypotheses of that theorem include countability of $R_1$, but the countability is only used for the uniqueness part of the theorem which we do not need.) Then apply Corollary~\ref{Fker cor} and build another model $F_2$ extending $F_1 \cup R_2$ by the same process. We have $F_1 \sstrong F_2$ by construction, and, by (3), if $\theta_R$ is elementary then $\theta$ will be elementary.
\end{enumerate}

\end{proof}
Part (2) can be rephrased as \B\ being near-model complete except for quantification over \Z. By \cite[Theorem~7.6]{FPEF}, it is not model complete after adding predicates for definable subsets of \Z, so this is best possible. 

Using (3) and (4), and the non-model completeness of $\Th(\Z;+,\cdot)$, we can find  $F, M \in \ECF$ with $F \sstrong M$ but $Z(F) \not\elsubs Z(M)$. Hence there is a non-model completeness of $\B$ arising directly from the non-model completeness of \Z, and hence completely different from that given by the proof in \cite{FPEF}. This second proof fits better with the only non-model completeness proofs known for \Cexp\ \cite[Proposition~1.1]{Marker06}, which do use the integers in an essential way.

\section{Other kernels}

Axiom 2$'$c, the complete theory of $\tuple{\Z;+,\cdot}$, is not used anywhere in this paper except in the proof of Theorem~\ref{completeness theorem} to show that $Z(F) \iso Z(M)$ where $F$ and $M$ are both special models of the same cardinality. The only property used there is that the theory is complete. From axioms 1, 2$'$a, and 2$'$b, we can deduce that $Z$ is an integral domain such that $\tuple{Z;+}$ is elementarily equivalent to $\tuple{\Z;+}$, but that is all that is used. So we could replace 2$'$c by the complete theory of any other integral domain whose additive group is a model of $\Th\tuple{\Z;+}$, and the same conclusions would hold. It would be very interesting to know if there is such a ring with a decidable theory, since together with recursive bounds for all the cases of CIT it would give a decidable complete theory of ELA-fields.

Axioms 2$'$a and 2$'$b, that the kernel is a cyclic $Z$-module and is transcendental over $Z$, are also not used much in the proof. The whole of axiom 2$'$ could be replaced by the complete theory of $\tuple{F;+,\cdot,K}$, where $F$ is an algebraically closed field and $K$ is any subgroup of $\ga(F)$  which is a model of Presburger arithmetic, and which is then taken to be the kernel.

\end{document}